\newtheorem{theorem}{Theorem}[section]
\newtheorem{lemma}[theorem]{Lemma}
\newtheorem{assumption}[theorem]{Assumption}
\newtheorem{remark}[theorem]{Remark}
\newtheorem{corollary}[theorem]{Corollary}
\renewcommand{\epsilon}{\varepsilon}
\newcommand{\blank}[1]{\hspace*{#1}\linebreak[0]}
\newenvironment{myproof}[1] {{\it #1.}}{\hfill\qed}
\def\@settitle{\begin{center}%
		\baselineskip14\p@\relax
		\normalfont\LARGE\scshape\bfseries
		%\uppercasenonmath\@title
		\@title
	\end{center}%
}
\def\subsection{\@startsection{subsection}{2}%
	\z@{.5\linespacing\@plus.7\linespacing}{.5\linespacing}%
	{\normalfont\large\bfseries}}
\def\subsubsection{\@startsection{subsubsection}{3}%
	\z@{.5\linespacing\@plus.7\linespacing}{.5\linespacing}%
	{\normalfont\itshape}}
\date{\today}
\authors}
\title[A New Lineserach for Accelerated Composite Minimization]{A New Lineserach for Accelerated \\ Composite Minimization}
\author[R. {Rahimi Baghbadorani}]{Reza {Rahimi Baghbadorani}}
\author[S. Grammatico]{Sergio Grammatico}
\author[P. {Mohajerin Esfahani}]{Peyman {Mohajerin Esfahani}}
\thanks{The authors are with the Delft Center for Systems and Control, TU Delft, The Netherlands. (e-mails: R.Rahimibaghbadorani@tudelft.nl; S.Grammatico@tudelft.nl; P.MohajerinEsfahani@tudelft.nl.) This work was supported by the ERC under the research project TRUST-91562 and COSMOS-91409.}
\begin{document}

\maketitle

\begin{abstract}   
The choice of the stepsize in first-order convex optimization is typically based on the smoothness constant and plays a crucial role in the performance of algorithms. Recently, there has been a resurgent interest in introducing adaptive stepsizes that do not explicitly depend on smooth constant. In this paper, we propose a novel linesearch stepsize rule based on function evaluations (i.e., zero-order information) that enjoys provable convergence guarantees for both accelerated and non-accelerated gradient descent. We further discuss the similarities and differences between the proposed stepsize regimes and the existing stepsize rules (including Polyak and Armijo). We numerically benchmark the performance of our proposed algorithms against state-of-the-art methods across three major problems classes of (1)~smooth minimization (logistic regression, quadratic programs, log-sum-exponential, and smooth max-cut relaxation) (2)~composite minimization ($\ell_1$-regularized least-squares, $\ell_1$-constrained least-squares, and $\ell_1$-regularized logistic regression), and (3)~non-convex minimization (cubic minimization). These classes include a wide range of operations research and management applications such as portfolio optimization, discrete choice models, sparse classification and feature selections, high-order optimization and trust-region subproblems.
% \textcolor{red}{Numerically, we benchmark the performance of our proposed algorithms against state-of-the-art methods across three major classes of optimization problems frequently arise in operations research and machine learning: (i)~smooth minimization, spanning logistic regression, a cornerstone of classification and demand modeling, quadratic programs that anchor portfolio selection and resource allocation, log-sum-exp formulations that shape discrete-choice analysis, and smooth max-cut relaxations that underpin combinatorial decision frameworks; (ii)~composite minimization, encompassing $\ell_1$-regularized and $\ell_1$-constrained least-squares and logistic models that drive sparse recovery, feature selection, and high-dimensional inference; and (iii)~non-convex minimization, captured through a cubic objective reflecting the structure of trust-region and other advanced non-convex optimization schemes.}
%Numerically, we benchmark the performance of our proposed algorithms with the state-of-the-art literature in three different classes of smooth minimization (logistic regression, quadratic programming, log-sum-exponential, and approximate semidefinite programming), composite minimization ($\ell_1$ constrained and regularized problems), and non-convex minimization (cubic problem). 

\textbf{Keywords:} Adaptive stepsize, linesearch rules, first-order methods, composite convex optimization 
\end{abstract}

%%%%%%%%%%%%%%%%%%%%%%%%%%%%%%%%%%%%%%%%%%%%%%%%%%%%%%%%%%%%%%%%%%%%%%%%%%%%%%%%%%%%%%%%%%%%%%%%%%%%%%%%%%%%%%%%%%%%%%%%%%%%%%%%%%%%%%%%%%%%%%%%%%%%%%%%%%%%%%%%%%%%%%%%%%%%%%

\section{Introduction}\label{intro}
Thanks to its simple implementation, scalability and applicability, gradient descent (GD) is arguably the most popular algorithm to efficiently solve large-scale optimization problems, which finds a wide range of applications in operations research \cite{im2025stochastic, xu2021first, ho2018online}, machine learning \cite{Battiti1992, lan2020first}, control and system identification~\cite{hardt2018gradient, fazel2013hankel}. The main challenge in using gradient descent is the choice of the right stepsize, which has a considerable impact on the convergence speed. Several works have studied this choice, yet either the fastest possible convergence rate cannot be guaranteed or an expensive linesearch at each iteration is needed \cite{Polyak1969, Malitsky2020, Nocedal2006, Drori2014}. In this paper, we address these issues by introducing a novel and simple linesearch method.

Let us consider the unconstrained convex minimization problem $\min\limits_{x \in \mathbb{R}^n} f(x)$, where $f$ is a smooth convex function. The usual iterative algorithm is of the form
\begin{equation}\label{stepsize-search direction}
    x_{k+1} = x_k + \lambda_k d_k. %, \,\,\, k \in \mathbb{N} 
\end{equation}
where $\lambda_k$ and $d_k$ represent the stepsize and the descent direction, respectively. In the last decades, many algorithms have been developed for choosing $\bigl( \lambda_k \bigr)_{k\in \mathbb{N}}$ and $\bigl( d_k \bigr)_{k\in \mathbb{N}}$ in \eqref{stepsize-search direction} either statically or adaptively. We review some classes of design choices in the following.

A key object playing an important role in determining most of the existing stepsize rules at the iteration $x_k$ is the objective function along the update direction $d_k = -\nabla f(x_k)$ defined by
\begin{equation}\label{phi func}
    \phi_k (\lambda) := f(x_k +\lambda d_k).
\end{equation}
When the function $f$ is convex, then the scaler function $\phi_k: \mathbb{R} \rightarrow \mathbb{R}$ defined in \eqref{phi func} is also convex. Several classical stepsize rules defined via $\phi_k$ in \eqref{phi func} are the following. 

\textbf{(i) GD with constant stepsize \cite{Nocedal2006}:} The simplest stepsize rule is the constant $\lambda_k = 1/{L}$, where $L$ is the so-called smoothness parameter (Lipschitz continuity of $\nabla f(x)$. The convergence rate of the suboptimality $f(x_k) - f(x^*)$ with the constant stepsize is $\mathcal{O}(k^{-1})$.

\textbf{(ii) GD with exact linesearch \cite{Nocedal2006}:} Another classical stepsize is to find the optimal stepsize minimizing the function $\phi_k$ along the direction $d_k$ is $\lambda_k = \arg \min_{\lambda} \phi_k(\lambda)$ which involves a scalar optimization problem that requires evaluation of the original function $f$.

\textbf{(iii) GD with approximated stepsize \cite{Nocedal2006}:} Due to the complexity of finding the stepsize by solving the exact linesearch minimization, alternative stepsize rules have been proposed that only require ensuring an inequality such as $\phi_k(\lambda) \leq \phi_k(0) + c \phi_k '(0)$ for some predefined constant $c > 0$. Among others, Armijo \cite{Armijo1966}, Wolf, and strong Wolf linesearch \cite{Nocedal2006} are backtracking linesearch methods falling into this category. 
    \begin{subequations}\label{backtracking linesearch}
    \begin{align}
        & \text{Armijo}: \qquad\qquad \phi_k(\lambda) \leq \phi_k(0) + c_1 \lambda \phi '_k(0), \,\, c_1 \in (0,1). \label{Armijo-rule}\\
        & \text{Wolf}: \qquad\qquad\quad \phi'_k(\lambda) \geq c_2 \phi'_k(0), \,\, c_2 \in (c_1, 1) \, + \,\text{Armijo condition}. \label{Wolfe-rule}\\
        & \text{strong Wolf}: \qquad |\phi'_k(\lambda)| \leq c_2 \phi'_k(0), \,\, c_2 \in (c_1, 1) + \,\text{Armijo condition}. \label{strong-Wolfe-rule}
    \end{align}
    \end{subequations}
The suboptimality convergence rate of these algorithms is $\mathcal{O}(k^{-1})$. There are also several stepsize rules approximating the smooth constant locally based on the zero-order oracle's information (i.e., the objective function evaluation) \cite{poliak1987introduction, Malitsky2020, barre2019polyak, barre2020complexity}. In the following, we review two important stepsize rules that fall into this category:

\textbf{(iv) GD with the Polyak stepsize~\cite{poliak1987introduction}:} One of the classic adaptive stepsize rules is the Polyak stepsize, which uses the zero-order oracle's information at each iteration to locally approximate the smooth constant of the objective function:
    \begin{align}\label{Polyak stepsize}
        \lambda_k = \dfrac{f(x_k) - f(x^*)}{\| \nabla f(x_k)\|^2}.
    \end{align}
Although the Polyak stepsize offers optimal convergence rates for GD \eqref{stepsize-search direction} when the update direction is set to $d_k = -\nabla f(x_k)$ \cite{hazan2019revisiting}, it cannot be applicable in problems where the optimal function value $f(x^*)$ is not available. We will get back to the Polyak stepsize and its relation to our proposed rule in Subsection \ref{Non-accelerated smooth minimization} (see equations \eqref{G_interval}).
    
\textbf{(v) GD with adaptive stepsize \cite{Malitsky2020}:} Recently, the next adaptive stepsize has been proposed based on the idea of approximating the smoothness constant locally: 
        \begin{equation} \label{malitsky ad-stepsize}
            \lambda_k = \min \left \{\sqrt{1+\theta_{k-1}}\lambda_{k-1}, \, \dfrac{\|x_k - x_{k-1}\|}{2\|\nabla f(x_k) - \nabla f(x_{k-1})\|}\right \},\quad\theta_k = \dfrac{\lambda_k}{\lambda_{k-1}}.
        \end{equation}
The second term on the left-hand side of \eqref{malitsky ad-stepsize} represents this local approximation as the inverse of the smoothness constant, while the first term ensures a required convergence rate of this parameter. The theoretical sub-optimality convergence rate for the stepsize rule \eqref{malitsky ad-stepsize} is $\mathcal{O}(k^{-1})$, however, the adaptive (i.e., state-dependent) nature of \eqref{malitsky ad-stepsize} has the advantage of (a) lifting the knowledge of the smoothness constant for the objective function, and (b) improving the practical convergence rate when the local smoothness is smaller than the global constant one.
Next, we consider methods with a dynamic direction $d_k$ in \eqref{stepsize-search direction}. The update rules in this class of methods have an extra term, which is called momentum, and allows the descent direction to have some inertia in the search space \cite{Polyak1969}. 

\textbf{(vi) AGD with constant stepsize \cite{Nesterov1983}:}
        \begin{equation*}
        \begin{cases}
            \beta_k = \dfrac{1+\sqrt{1+4\beta_{k-1}^2}}{2}, \,\, \gamma_k = \dfrac{1-\beta_k}{\beta_{k+1}},\\
            \lambda_k = \dfrac{1}{L},\,\,\, d_k = -\gamma_k \left (\dfrac{\lambda_{k-1}}{\lambda_k} d_{k-1} - (1 + \dfrac{1}{\gamma_k})\nabla f(x_k) + \dfrac{\lambda_{k-1}}{\lambda_k}\nabla f(x_{k-1}) \right),
      %\Comment{momentum term}
        \end{cases}
        \end{equation*}
where $L$ is the smooth constant of $f$. The descent direction in this method uses previous iteration values to accelerate the convergence rate, which is $\mathcal{O}(k^{-2})$ for $f(x_k) - f(x^*)$.
    
\textbf{(vii) AGD with adaptive stepsize \cite{Malitsky2020}:} A heuristic version of adaptive stepsize \eqref{malitsky ad-stepsize} for accelerated methods is also proposed in \cite[Section 3]{Malitsky2020}
    \begin{equation*}
        \begin{cases}
            \lambda_k = \min \left \{\sqrt{1+\dfrac{\theta_{k-1}}{2}}\lambda_{k-1}, \, \dfrac{\|x_k - x_{k-1}\|}{2\|\nabla f(x_k) - \nabla f(x_{k-1})\|}\right \}, \,\,\, \theta_k = \dfrac{\lambda_k}{\lambda_{k-1}},\\
            \Lambda_k = \min \left \{\sqrt{1+\dfrac{\Theta_{k-1}}{2}}\Lambda_{k-1}, \, \dfrac{\|\nabla f(x_k) - \nabla f(x_{k-1})\|}{2\|x_k - x_{k-1}\|}\right \}, \,\,\, \Theta_k = \dfrac{\Lambda_k}{\Lambda_{k-1}},\\
            \beta_k = \dfrac{\sqrt{1/ \lambda_k} - \sqrt{\Lambda_k}}{\sqrt{1/ \lambda_k} + \sqrt{\Lambda_k}},\\
            d_k = \beta_k \left(\dfrac{\lambda_{k-1}}{\lambda_k} d_{k-1} - (1 + \dfrac{1}{\beta_k})\nabla f(x_k) + \dfrac{\lambda_{k-1}}{\lambda_k}\nabla f(x_{k-1}) \right).
        \end{cases}
    \end{equation*}
This adaptive rule presents promising numerical performance while there is no formal theoretical guarantee explaining this interesting performance. This theoretical gap toward accelerated techniques is one of the motivations of this study.
        
We also consider the composite minimization problem
            \begin{equation}\label{composite problem}
                \min_{x \in \mathcal{X}} F(x) = \min_{x \in \mathcal{X}} f(x)+h(x)
            \end{equation}
where $f\colon \mathcal{X} \to \mathbb{R}$ is a convex and smooth function, and $h\colon \mathcal{X} \to \mathbb{R}$ is convex and possibly non-smooth but prox-friendly (technical details are postponed to Section \ref{Non-accelerated adaptive stepsize}). There are several methods for convex composite minimization in the existing literature. Among them, subgradient descent and mirror descent are two well-known classic methods that suffer from slow convergence rate $\mathcal{O}(k^{-1/2})$ \cite{nemirovskij1983problem,Beck2003,yurtsever2018conditional,yang2018rsg}. Following the seminal works by Nesterov, one can exploit the structure of the nonsmooth part in the objective function and deploy a first-order accelerated method.  ISTA and FISTA \cite{Beck2009}, which are extensions of GD and NAGD, respectively, offer a faster convergence rate compared to subgradient and mirror descent. However, these methods require knowledge of the smooth constant associated with the smooth part, $f$ in \eqref{composite problem}. Additionally, there are various backtracking techniques available to approximate the smooth constant, but they often tend to be conservative, resulting in larger values than the actual smooth constant. This conservatism can impact the convergence speed. The authors in \cite{vladarean2021first, malitsky2020golden} propose an algorithm that enjoys the local smoothness of $f$ to determine the stepsize. However, despite its simplicity and efficacy, the convergence guarantee is still the suboptimal rate of $\mathcal{O}(k^{-1})$. More recently, the authors in \cite{li2023simple} propose a fully adaptive stepsize with momentum acceleration and a convergence rate of $\mathcal{O}(k^{-2})$. However, the algorithm requires several hyperparameters, which may affect the performance in practice. Inspired by this, we develop a hyperparameter-free algorithm for convex composite minimization problems \eqref{composite problem} with the optimal theoretical convergence rate $\mathcal{O}(k^{-2})$ while maintaining a reasonable level of computational complexity.
        
\textbf{Contributions.} Following the same spirit of the adaptive stepsize and linesearch rules, this study proposes a novel stepsize rule without the knowledge of the global smoothness constant for solving convex composite minimization problems. Using a Lyapunov-based argument, we show that the proposed rule enjoys the optimal worst-case complexity bound of $\mathcal{O}(k^{-1})$ and $\mathcal{O}(k^{-2})$ in both cases of nonaccelerated and accelerated algorithms, respectively. These results are developed for a general class of composite convex functions and are optimal in the sense that they match the theoretical lower bounds of the class of first-order algorithms for this class of functions. Our convergence results are summarized in Table \ref{table1} where our update direction $d_k = -G^{f}_{\lambda h}(x_k)$ is our gradient mapping (see the definition \eqref{grad_mapping} for more details concerning the gradient mapping). Let us note that the results in Table \ref{table1} are reported for the general composite case \eqref{composite problem}. If the nonsmooth term is $h(x) = 0$, then the gradient mapping reduces to $G^{f}_{\lambda h}(x) = \nabla f(x)$. In the rest of the introduction, we briefly discuss our proposed stepsize rule in this special case of a smooth convex function and provide a geometrical illustration in comparison with the existing rules reviewed above. 
        \begin{table}[H]
           \centering
           \caption{Summary of the main results of this paper.}
           \small
           \begin{tabular}{c c c c}\hline
                  %   \toprule
         {Algorithm} & {Problem} & {Stepsize rule} & {Convergence rate}\\
   %      \midrule
           \hline & \\[-1.0em]
            nonaccelerated (Theorem \ref{theorem1}) & composite & $\phi(2\lambda) \leq \phi(\lambda) - \lambda \langle G^{f}_{\lambda h}(x), \nabla f(x) \rangle + \dfrac{\lambda}{2}\|G^{f}_{\lambda h}(x))\|^2$ & $\mathcal{O}(k^{-1})$\\
            nonaccelerated (Corollary \ref{corollary-smooth-nonaccelerated}) & smooth & $\phi(2\lambda) \leq \phi(\lambda) + \dfrac{\lambda}{2}\phi^\prime(0)$ & $\mathcal{O}(k^{-1})$\\
            accelerated (Theorem \ref{theorem3}) & composite & $\phi(2\lambda) \leq \phi(\lambda) - \lambda \langle G^{f}_{\lambda h}(x), \nabla f(x) \rangle + \dfrac{\lambda}{2}\|G^{f}_{\lambda h}(x))\|^2$ & $\mathcal{O}(k^{-2})$\\
            accelerated (Corollary \ref{remark for smooth}) & smooth & $\phi(2\lambda) \leq \phi(\lambda) + \dfrac{\lambda}{2}\phi^\prime(0)$ & $\mathcal{O}(k^{-2})$\\
            \\[-1.0em]
            \hline
    %      \bottomrule
           \end{tabular}
           \label{table1}
       \end{table}    
\textbf{Partial result}: If $f$ is a smooth and convex function, our proposed stepsize rule is defined as
        \begin{equation} \label{partial stepsize rule}
          %\lambda_{k}^{\mathrm{R}} := \max_{\lambda} \left \{\lambda \,|\,\,  \phi^{\prime}(\lambda, x_k) \leq \dfrac{1}{2}\phi^{\prime}(0, x_k) \right\} = \max_{\lambda} \left \{\lambda \,|\,\, \nabla f(x_k) \nabla f(x_k -\lambda \nabla f(x_k)) \geq \dfrac{1}{2} \|\nabla f(x_k)\|^2 \right \}
          %\lambda_{k}^{\mathrm{R}} := \arg\max\limits_{\lambda} \left \{\lambda \,|\,\,  \phi(2\lambda, x_k) \leq \dfrac{1}{2}\phi^{\prime}(0, x_k)\lambda + \phi(\lambda, x_k) \right\}
          \lambda_{k}^{\mathrm{our}} := \arg\max\limits_{\lambda} \left \{\lambda \,|\,\,  \phi_k(2\lambda) \leq \dfrac{1}{2}\phi_k^{\prime}(0)\lambda + \phi_k(\lambda) \right\}
        \end{equation}
        where the function $\phi_k$ is defined in \eqref{phi func} and visualized in Figure \ref{fig:phi}. Recall that in the case of non-accelerated methods, the update direction is simply $d_k = -\nabla f(x)$. We emphasize that the proposed linesearch above only requires additional zeroth-order information, namely the objective function evaluation, while the first-order information is  the gradient of the function which is already computed in the previous step. Figure \ref{fig:linesearch methods} illustrates a geometric interpretation of the proposed stepsize rule \eqref{partial stepsize rule}, together with the existing literature reviewed in the preceding section. For further geometrical interpretation, particularly in case of accelerated methods, we refer to \cite{bubeck2015geometric}.

\begin{figure}[ht]
    \centering
    \captionsetup{justification=centering}
    \subfloat[$\phi_k(\lambda)$.]{\label{fig:phi}\includegraphics[scale=0.75]{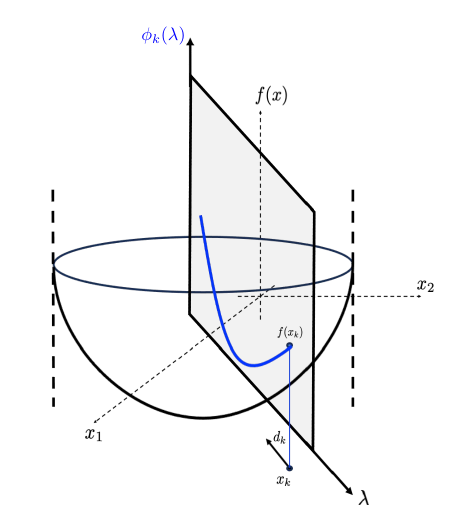}}
        \hspace{1em}
    \subfloat[Stepsize rules.]{\label{fig:linesearch methods}\includegraphics[scale=0.40]{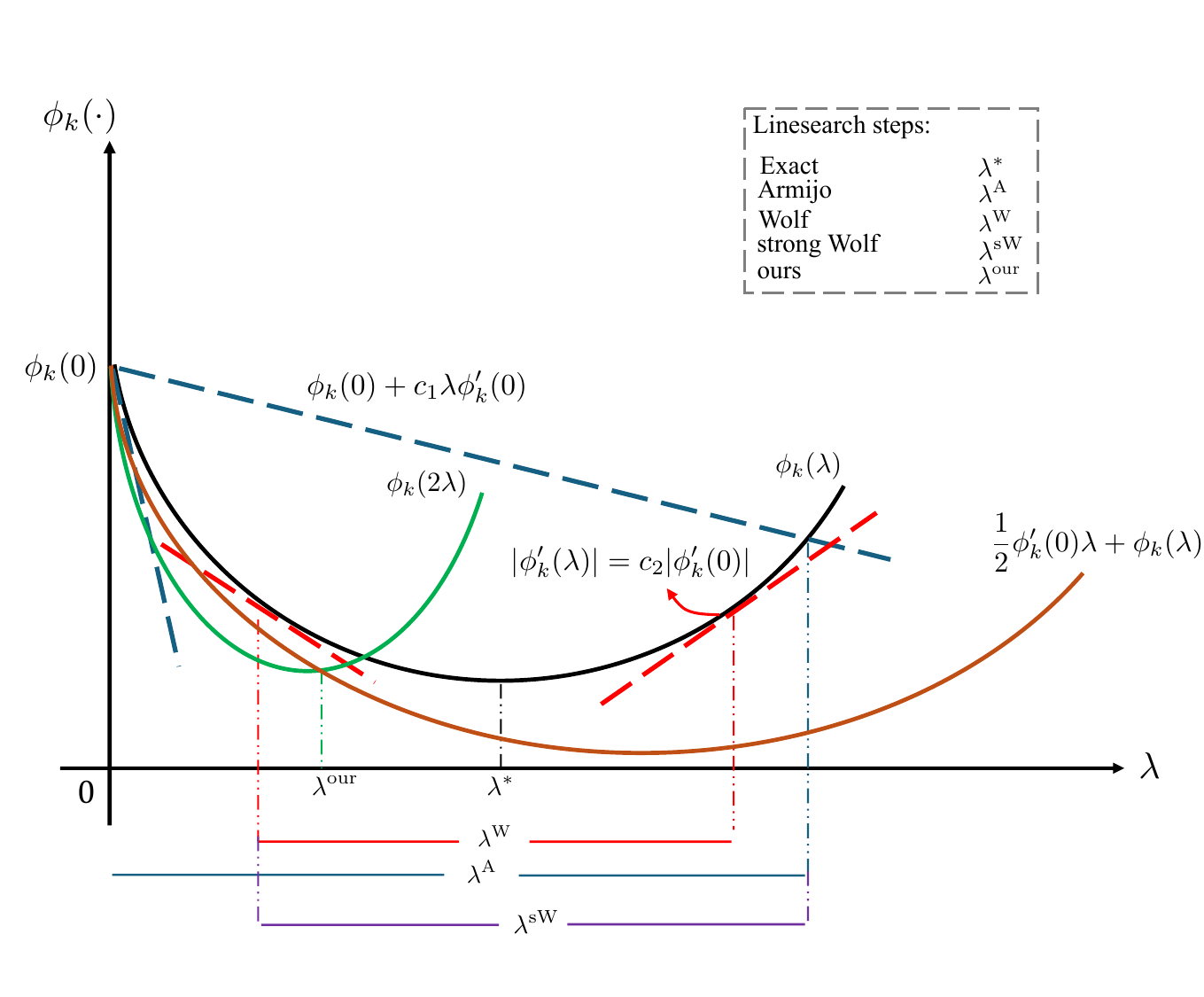}}
    \caption{Geometric interpretation of different stepsize rules using $\phi_k(\lambda)$ defined in \eqref{phi func}.}
    \label{fig1}
\end{figure}
The paper is organized as follows: In Section \ref{Non-accelerated adaptive stepsize}, we present the theoretical results for the non-accelerated adaptive stepsize algorithm. Section \ref{Accelerated adaptive stepsize} provides technical proofs for the accelerated adaptive stepsize case. Some implementation techniques and illustrative examples to show the efficiency of our approaches are presented in Section \ref{Numerical results}. Finally, the conclusion and further discussion are given in Section \ref{conclussion}.

\textbf{Notation.} Let $\mathcal{X}$ be the finite-dimensional real vector space with the standard inner product $\langle \cdot, \cdot \rangle$ and $\ell_p$-norm  $\|\cdot\|_p$ (by $\|\cdot\|$, we mean the Euclidean standard 2-norm). If $f$ is differentiable, then $\nabla f(x)$ represents the gradient of $f$ at $x$. The function $f$ is $L$-smooth, or equivalently the gradient of $f$ is $L$-Lipschitz, if for all $x,y \in \mathbb{R}^d$ one of the following inequalities is satisfied: 
\begin{subequations}\label{smoothness inequality}
\begin{align}
    & \|\nabla f(x) - \nabla f(y)\| \leq L\|x-y\|, \label{eq2} \\
    & f(y) \leq f(x) + \langle \nabla f(x),y-x \rangle + \dfrac{L}{2} \|x-y\|^2. \label{eq1}
\end{align}
\end{subequations}
Furthermore, $f$ is locally smooth if it is smooth over any compact set of its domain, see \cite[Chap. 2]{Nesterov2018} for more details. The prox operator of a convex function $h \colon \mathcal{X} \to \mathbb{R}$ is defined as
\begin{equation}\label{prox-operator}
    \text{prox}_{h}(x) = \arg \min_{u} h(u) + \dfrac{1}{2} \|u-x\|^2.
\end{equation}
A function is ``prox-friendly'' if the prox operator in~\eqref{prox-operator} is (computationally or explicitly) available. We also denote the gradient mapping of two convex functions~$f$ and $h$ by 
\begin{align}
\label{grad_mapping}
G^{f}_{\lambda h}(x) := \dfrac{1}{\lambda}\Big(x - \mathrm{prox}_{\lambda  h}\big(x - \lambda \nabla f(x)\big)\Big),   
\end{align}
where $\lambda$ is a positive scalar and has a stepsize interpretation. The gradient mapping is available if $f$ is differentiable and $h$ is prox-friendly.
%%%%%%%%%%%%%%%%%%%%%%%%%%%%%%%%%%%%%%%%%%%%%%%%%%%%%%%%%%%%%%%%%%%%%%%%%%%%%%%%%%%%%%%%%%%%%%%%%%%%%%%%%%%%%%%%%%%%%%%%%%%%%%%%%%%%%%%%%%%%%%%%%%%%%%%%%%%%%%%%%%%%%%%%%%%%%%
\section{Non-Accelerated Adaptive Stepsize} \label{Non-accelerated adaptive stepsize}
We present our algorithm and its convergence analysis for the non-accelerated case in this section. From a high-level viewpoint, the proposed rule follows the proximal gradient descent method with the difference in the choice of stepsize $\lambda$, which depends on the previous state and its gradient.
%\subsection{Preliminaries}
%The following assumptions are assumed through this paper.
%\begin{assumption}\label{assum1}
%The function $f$ is convex and $h$ is proximal-friendly function in the composite problem.
%\end{assumption}
\subsection{Preliminaries}\label{Preliminary}
 We first proceed with some assumptions and lemmas that will be used throughout the paper. We note that the classical Cauchy-Schwartz and convexity inequalities are the two useful tools in our analysis and most of the results build on the seminal works by Nesterov \cite{Nesterov1983} and Polyak \cite{Polyak1969}. The following assumptions hold throughout this study. 
 
\begin{assumption}[Convex regularity]\label{assum2}
The function~$F$ in \eqref{composite problem} admits bounded level sets, whereby the terms~$f(x)$ and $h(x)$ are smooth and prox-friendly, respectively. 
\end{assumption}
%%%%%%%%%%%%%%%%%%%%%%%%%%%%%%%%%%%%%%%%%%%%%%%%%%%%%%%%%%%%%%%%%%%%%%%%%%%%%%%%%%%%%%%%%%%%%%%%%%%%%%%%%%%%%%%%%%%%%%%%%%%%%%%%%%%%%%%%%%%%%%%%%%%%%%%%%%%%%%%%%%%%%%%%%%%%%%
The next lemma indicates several properties of the composite minimization \eqref{composite problem} that are central to develop the algorithms in this paper. %For better readability, we skip the proofs and postpone them to Section \ref{Technical proof}.
\begin{lemma}[Gradient mapping]\label{keylemma}
Let $G^{f}_{\lambda h}(x)$ be the gradient mapping defined in \eqref{grad_mapping} for a smooth convex function $f$, a possibly nonsmooth function $h$, and a positive constant $\lambda$ in $\mathbb{R}_+$.
\begin{enumerate}[label=(\roman*), itemsep = 0mm, topsep = 0mm, leftmargin = 7mm]
\item \label{lema3}
{\bf Stationary condition:} The vector $x$ is a minimizer of \eqref{composite problem} if and only if $G^{f}_{\lambda h}(x) = 0$.

\item \label{lema1}
{\bf Convexity-like inequality:} For all $x,y$ in $\mathbb{R}^d$ we have
\begin{align*}
    h\left(x-\lambda G^{f}_{\lambda h}(x)\right) \leq h(y) - \langle G^{f}_{\lambda h}(x) - \nabla f(x), y - (x - \lambda G^{f}_{\lambda h}(x)) \rangle.  %\quad \forall x,y \in \mathbb{R}^d
\end{align*}

\item \label{lema2}
{\bf Increment bound:} Considering $x^+ = x - \lambda G^{f}_{\lambda h}(x)$, for any $z \in \mathbb{R}^d$ we have
\begin{equation*}\label{eq3}
F(x^+) - F(z) \leq \langle x^+ - x,  \nabla f(x^+) - \nabla f(x) + \dfrac{1}{2}G^{f}_{\lambda h}(x) \rangle - \Big( \dfrac{1}{2\lambda} \|x^+ - x\|^2 + \dfrac{1}{\lambda}\langle x^+ -x, x - z \rangle \Big).
\end{equation*}

\item \label{lema4linesearch}
{\bf Zero-order linsearch:} Let the update direction~$d_k = -G^{f}_{\lambda h}(x)$. If $\lambda$ satisfies
%If $\lambda$ satisfies the following inequality with search direction $d_k = -G^{f}_{\lambda h}(x)$
%To provide the convergence analysis, we need the first term on the right-hand side of \ref{lema2} to remain negative. The following stepsize rule guarantees the negativity of this term.
\begin{equation}\label{ZO linesearch}
    \phi_k(2\lambda) \leq \phi_k(\lambda) - \lambda \langle G^{f}_{\lambda h}(x), \nabla f(x) \rangle + \dfrac{\lambda}{2}\|G^{f}_{\lambda h}(x))\|^2,
\end{equation}
we then have $\langle x^+ - x,  \nabla f(x^+) - \nabla f(x) + \dfrac{1}{2}G^{f}_{\lambda h}(x) \rangle \leq 0$.
\end{enumerate}
\end{lemma}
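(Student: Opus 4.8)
The plan is to translate both the hypothesis \eqref{ZO linesearch} and the desired conclusion into statements about the scalar function $\phi_k(t) = f(x + t d_k) = f(x - t G^{f}_{\lambda h}(x))$ and its derivative, and then close the gap using convexity of $\phi_k$. Throughout I write $G := G^{f}_{\lambda h}(x)$ and recall $x^+ = x - \lambda G$, so that $x^+ - x = -\lambda G$.

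First I would record the two derivative identities obtained from the chain rule, namely $\phi_k'(0) = \langle \nabla f(x), d_k \rangle = -\langle \nabla f(x), G \rangle$ and $\phi_k'(\lambda) = \langle \nabla f(x^+), d_k \rangle = -\langle \nabla f(x^+), G \rangle$. Expanding the target inner product with $x^+ - x = -\lambda G$ gives $\langle x^+ - x, \nabla f(x^+) - \nabla f(x) + \tfrac{1}{2} G \rangle = \lambda\bigl(\phi_k'(\lambda) - \phi_k'(0) - \tfrac{1}{2}\|G\|^2\bigr)$, so (since $\lambda > 0$) the conclusion is equivalent to the scalar inequality $\phi_k'(\lambda) \leq \phi_k'(0) + \tfrac{1}{2}\|G\|^2$.

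Next I would rewrite the linesearch hypothesis \eqref{ZO linesearch} in the same language: using $-\langle G, \nabla f(x) \rangle = \phi_k'(0)$ together with $\phi_k(\lambda) = f(x^+)$ and $\phi_k(2\lambda) = f(x - 2\lambda G)$, the condition reads $\phi_k(2\lambda) \leq \phi_k(\lambda) + \lambda\phi_k'(0) + \tfrac{\lambda}{2}\|G\|^2$. The key remaining ingredient is convexity: since $f$ is convex and $t \mapsto x + t d_k$ is affine, $\phi_k$ is convex, and the first-order convexity inequality anchored at $\lambda$ and evaluated at $2\lambda$ yields $\phi_k(2\lambda) \geq \phi_k(\lambda) + \lambda\phi_k'(\lambda)$. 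Chaining this lower bound with the hypothesis and cancelling $\phi_k(\lambda)$ leaves $\lambda\phi_k'(\lambda) \leq \lambda\phi_k'(0) + \tfrac{\lambda}{2}\|G\|^2$; dividing by $\lambda > 0$ gives exactly the scalar inequality isolated above, which completes the argument.

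The argument is essentially bookkeeping once the right convexity inequality is chosen; the only point demanding care is applying the first-order convexity bound in the correct direction---anchoring the tangent at the point $\lambda$ and evaluating at $2\lambda$ (rather than the reverse), since this is precisely what converts the hypothesis controlling $\phi_k(2\lambda)$ into control of the slope $\phi_k'(\lambda)$. I would also double-check the sign conventions in the derivative identities, because the direction is $d_k = -G$ and a single dropped minus sign would flip the final inequality.
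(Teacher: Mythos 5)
Your argument for part \ref{lema4linesearch} is correct, and it is essentially the paper's own proof in different notation: the paper applies the first-order convexity inequality of $f$ at the point $x-\lambda G^{f}_{\lambda h}(x)$ evaluated toward $x-2\lambda G^{f}_{\lambda h}(x)$, which is exactly your tangent bound $\phi_k(2\lambda)\geq\phi_k(\lambda)+\lambda\phi_k'(\lambda)$, and then rearranges; your reformulation of the conclusion as $\phi_k'(\lambda)\leq\phi_k'(0)+\tfrac{1}{2}\|G^{f}_{\lambda h}(x)\|^2$ is that same rearrangement, and your sign bookkeeping is handled correctly.

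The genuine gap is coverage: the statement is a four-part lemma, and you have proved only part \ref{lema4linesearch}. Parts \ref{lema3}, \ref{lema1}, and \ref{lema2} are not addressed, and they cannot be obtained from your scalar-function machinery, because they concern the nonsmooth term $h$ and the prox operator rather than $f$ restricted to a ray. Concretely, what is missing is: (a) the subdifferential characterization of the prox operator, $r=\mathrm{prox}_{\lambda h}(t)\iff t-r\in\lambda\partial h(r)$, which, applied to $t=x-\lambda\nabla f(x)$, yields the inclusion $G^{f}_{\lambda h}(x)-\nabla f(x)\in\partial h\left(x-\lambda G^{f}_{\lambda h}(x)\right)$; (b) from this inclusion, part \ref{lema3} follows by unwinding $G^{f}_{\lambda h}(x)=0$ into $0\in\nabla f(x)+\partial h(x)$ (both directions of the equivalence), and part \ref{lema1} follows by the subgradient inequality for the convex function $h$; and (c) part \ref{lema2} then requires combining part \ref{lema1} (with $y=z$) with convexity of $f$ applied twice---once at $x$ toward $z$ and once at $x^+$ toward $x$---followed by substituting $G^{f}_{\lambda h}(x)=\tfrac{1}{\lambda}(x-x^+)$ and the add-and-subtract step that produces the terms $\tfrac{1}{2\lambda}\|x^+-x\|^2$, $\tfrac{1}{\lambda}\langle x^+-x,\,x-z\rangle$, and the $\tfrac{1}{2}G^{f}_{\lambda h}(x)$ inside the inner product. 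Since part \ref{lema2} is the increment bound on which both convergence theorems rest, and part \ref{lema4linesearch} is only meaningful as the device that makes the first term of part \ref{lema2} nonpositive, a proof of this lemma cannot omit these three parts.
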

Before providing the technical proof of the lemma, let us offer some insights into why the above four statements will help us develop algorithms: 
%Lemma \ref{keylemma} provides a pathway for using a potential of gradient mapping in the following manner: 
The convex-like inequality \ref{lema1} is a gradient mapping intrinsic characteristic that is particularly useful for controlling the increment of the original function $F$ in \eqref{composite problem} along the direction of $d_k = -G^{f}_{\lambda h}(x)$. In both cases of the non-acceleration and acceleration approaches, the increment inequality \ref{lema2} is crucial in proving our convergence analysis. The increment bound~\ref{lema2} allows for the inclusion of a momentum term that emerges in the acceleration dynamics. The first term on the right-hand side of \ref{lema2} is an unfavorable term which is often targeted via the stepsize rule to remain negative. Checking the linesearch in \ref{lema4linesearch} guarantees the negativity of this unfavorable term, which only requires additional zeroth-order information of function $F$ while the gradient (first-order) information of the gradient mapping $G^{f}_{\lambda h}(x)$ has already been computed. This is a crucial feature, making a stepsize rule computationally more useful in practice. We conclude this section with the proof of the lemma.

\begin{myproof}{Proof of Lemma \ref{keylemma}}
We provide the proof of each part separately as follows:\\
Part \ref{lema3}: Assume $\hat{x}$ is the minimizer of the composite minimization \eqref{composite problem}. Then, by the definition of \text{prox} operator \eqref{prox-operator}, we have the following equivalent implications
\begin{align*}
G^{f}_{\lambda h}(\hat{x}) = 0 & \iff \hat{x} = \text{prox}_{\lambda h}\left(\hat{x} - \lambda \nabla f(\hat{x})\right) \iff (\hat{x} - \lambda \nabla f(\hat{x})) - \hat{x} \in \lambda \partial h(\hat{x}) \\
& \iff - \lambda \nabla f(\hat{x}) \in \lambda \partial h(\hat{x}) \iff 0 \in \nabla f(\hat{x}) + \partial h(\hat{x}) \iff \hat{x} \,\, \text{is the minimizer of \eqref{composite problem}}. 
\end{align*}
% \vspace{5px} \\
%%%%%%%%%%%%%%%%%%%%%%%%%%%%%%%%%%%%%%%%%%%%%%%%%%%%%%%%%%%%%%%%%%%%%%%%%%%%%%%%%%%%%%%
Part \ref{lema1}: Let $r = \text{prox}_{\lambda h}(t)$. By the definition \eqref{prox-operator} and the first order optimality condition,  we can write
\begin{equation} \label{proof lemma sec2}
r = \arg \min_{r} h(r) + \dfrac{1}{2\lambda} ||r-t||^2 \iff  0 \in \partial{h(r)}+\dfrac{1}{\lambda}(r-t)\iff t-r \in \lambda \partial{h(r)}   .
\end{equation}
By defining $u := x-\lambda G^{f}_{\lambda h}(x)$ and $w := x-\lambda \nabla f(x)$ we have
\begin{equation*}
u = x-\lambda G^{f}_{\lambda h}(x) = x - \lambda \dfrac{1}{\lambda}\Big(x - \text{prox}_{\lambda h}\big(x - \lambda \nabla f(x)\big)\Big) = \text{prox}_{\lambda h}(x - \lambda \nabla f(x)) =  \text{prox}(w).
\end{equation*}
Now, using \eqref{proof lemma sec2}, we can conclude: $G^{f}_{\lambda h}(x) - \nabla f(x) \in \partial{h(x-\lambda G^{f}_{\lambda h}(x))}$.
\begin{comment}
\begin{equation*}
    G^{f}_{\lambda h}(x) - \nabla f(x) \in \partial{h(x-\lambda G^{f}_{\lambda h}(x))}.
\end{equation*}
\end{comment}
Finally, using convexity of $h$ we can write
\begin{equation*}
h(x-\lambda G^{f}_{\lambda h}(x)) \leq h(y) - \langle G^{f}_{\lambda h}(x) - \nabla f(x), y - (x - \lambda G^{f}_{\lambda h}(x)) \rangle.  
\end{equation*}
%\vspace{1px}
\\
%%%%%%%%%%%%%%%%%%%%%%%%%%%%%%%%%%%%%%%%%%%%%%%%%%%%%%%%%%%%%%%%%%%%%%%%%%%%%%%%%%%%%%%
Part \ref{lema2}: By using the definition of \(F\) we have
\begin{align*}
  F(x^+) - F(z) &= f(x^+) + h(x^+) - f(z) - h(z) \nonumber \\
  &= f(x - \lambda G^{f}_{\lambda h}(x)) - f(z) + h(x - \lambda G^{f}_{\lambda h}(x)) - h(z) 
\end{align*}  
 Using the result of \ref{lema1} on the right-hand side of the above equality and the convexity of \(F\) yields 
\begin{align}\label{aux-lemma2}
  F(x^+) - F(z) &\leq f(x - \lambda G^{f}_{\lambda h}(x)) - f(x) + \langle \nabla  f(x), x - z \rangle - \langle G^{f}_{\lambda h}(x) - \nabla f(x), z - (x - \lambda G^{f}_{\lambda h}(x)) \rangle \nonumber \\
  &\leq \langle \nabla  f(x - \lambda G^{f}_{\lambda h}(x)), x - \lambda G^{f}_{\lambda h}(x) - x \rangle + \langle G^{f}_{\lambda h}(x), x^+ - z\rangle + \langle \nabla f(x), \lambda G^{f}_{\lambda h}(x) \rangle \nonumber \\
  &= \langle x^+ - x, \nabla f(x^+) \rangle - \dfrac{1}{\lambda} \langle x^+ - x, x^+ - z \rangle + \langle x - x^+, \nabla f(x) \rangle.
  \end{align}
By adding and subtracting $\dfrac{1}{\lambda} \langle x^+ - x, x \rangle$ in \eqref{aux-lemma2}, we obtain
\begin{align*}
  & F(x^+) - F(z) \leq \langle x^+ - x, \nabla f(x^+) - \nabla f(x) + \dfrac{1}{2}G^{f}_{\lambda h}(x) \rangle - \left( \dfrac{1}{2\lambda}\|x^+ - x\|^2 + \dfrac{1}{\lambda} \langle x^+ - x, x - z \rangle \right).
\end{align*}\\
%\vspace{1px}
%%%%%%%%%%%%%%%%%%%%%%%%%%%%%%%%%%%%%%%%%%%%%%%%%%%%%%%%%%%%%%%%%%%%%%%%%%%%%%%%%%%%%%%
Part \ref{lema4linesearch}: Replacing the definition $\phi_k(\cdot)$ from \eqref{phi func} in \eqref{ZO linesearch} leads to
    \begin{align*}
        &f(x - 2\lambda G^{f}_{\lambda h}(x)) - f(x - \lambda G^{f}_{\lambda h}(x)) \leq -\lambda \langle G^{f}_{\lambda h}(x), \nabla f(x) \rangle + \dfrac{\lambda}{2}\|G^{f}_{\lambda h}(x)\|^2.
    \end{align*}
    Using the convexity of $f$ on the left-hand side of the above inequality yields
    \begin{align*}   
        & \langle \nabla f(x - \lambda G^{f}_{\lambda h}(x)), x - 2\lambda G^{f}_{\lambda h}(x) - x + \lambda G^{f}_{\lambda h}(x) \rangle \leq -\lambda \langle G^{f}_{\lambda h}(x), \nabla f(x) \rangle + \dfrac{\lambda}{2}\|G^{f}_{\lambda h}(x)\|^2.
    \end{align*}
    Considering the definition of $x^+$ in part \ref{lema2}, we rearrange the above inequality and arrive at
    \begin{align*}
        & \langle \nabla f(x - \lambda G^{f}_{\lambda h}(x)) - \nabla f(x) + \dfrac{1}{2} G^{f}_{\lambda h}(x), G^{f}_{\lambda h}(x)\rangle \geq 0  \\
        &\, \Longrightarrow \, \langle \nabla f(x^+) - \nabla f(x) + \dfrac{1}{2}G^{f}_{\lambda h}(x), x^+ - x \rangle \leq 0.
    \end{align*}
\end{myproof}
\subsection{Non-accelerated composite minimization}
This section focuses on devising an adaptive stepsize rule for non-accelerated algorithms (i.e., $d_k = - G^{f}_{\lambda h}(x)$) for the general class of composite functions \eqref{composite problem}. Next, we propose our non-accelerated stepsize rule for the general composite minimization problem~\eqref{composite problem}. In this case, the zero-order linesearch~\ref{lema4linesearch} is the driving force behind the proposed stepsize rule, after which we only need to apply the classic gradient descent update \eqref{stepsize-search direction} with $d_k = - G^{f}_{\lambda h}(x)$. This discussion is formalized in the next theorem. 
%and the latter illustrates $\mathcal{O}(k^{-1})$ convergence rate when the non-smooth part is omitted.
\begin{theorem}[Non-accelerated adaptive stepsize]\label{theorem1}
Consider the function~$F$ in \eqref{composite problem} as a composition of a smooth function $f(x)$ and a prox-friendly function $h(x)$. Assume that the sequence $\left (x_k \right)_{k \in \mathbb{N}}$ generated by the update algorithm \eqref{stepsize-search direction} in which the direction and stepsize rule are
\begin{equation*}\label{alg:Algorithm1}
\tag{Alg.\,1}
\begin{cases}
  \lambda_k = \max\limits \left \{\lambda \in \mathbb{R}_+ \,|\,\, \phi_k(2\lambda) \leq \phi_k(\lambda) - \lambda \langle G^{f}_{\lambda h}(x_k), \nabla f(x_k) \rangle + \dfrac{\lambda}{2}\|G^{f}_{\lambda h}(x_k))\|^2\right\},\\      
  d_k = -G^{f}_{\lambda_k h}(x_k).\\
\end{cases}
\end{equation*}
Then, we have the uniform stepsize lower bound $\lambda_k \geq {1}/{3L}$ where $L$ is the smoothness constant of $f$. Moreover, we have the sub-optimality bound $F(x_k) - F(x^*) \leq \dfrac{D}{k}$ where $D$ is a constant that only depends on the initial condition $x_0$, the optimal solution $x^*$, and the smoothness constant $L$.
\end{theorem}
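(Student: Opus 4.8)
The plan is to prove the two claims in order, since the $\mathcal{O}(1/k)$ rate relies on the stepsize lower bound. For the bound $\lambda_k \geq 1/(3L)$, I would show that the inequality defining $\lambda_k$ holds for \emph{every} candidate $\lambda \in (0, 1/(3L)]$, so its maximizer is at least $1/(3L)$. Fix such a $\lambda$, abbreviate $G := G^{f}_{\lambda h}(x_k)$ and $x^+ := x_k - \lambda G$, so that $\phi_k(\lambda) = f(x^+)$ and $\phi_k(2\lambda) = f(x^+ - \lambda G)$. Applying the descent inequality \eqref{eq1} to the pair $(x^+, x^+ - \lambda G)$ gives $\phi_k(2\lambda) \leq \phi_k(\lambda) - \lambda \langle \nabla f(x^+), G\rangle + \frac{L\lambda^2}{2}\|G\|^2$. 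Matching this against the right-hand side of the linesearch condition reduces the claim to $\langle G, \nabla f(x_k) - \nabla f(x^+)\rangle \leq \frac{1 - L\lambda}{2}\|G\|^2$; the left side is at most $L\lambda\|G\|^2$ by Cauchy--Schwarz and the Lipschitz bound \eqref{eq2} (using $\|x_k - x^+\| = \lambda\|G\|$), and $L\lambda \leq (1 - L\lambda)/2$ is exactly $\lambda \leq 1/(3L)$.

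For the convergence rate, the starting point is the increment bound \ref{lema2}, namely $F(x_{k+1}) - F(z) \leq \langle x_{k+1} - x_k, \nabla f(x_{k+1}) - \nabla f(x_k) + \frac12 G^{f}_{\lambda_k h}(x_k)\rangle - (\frac{1}{2\lambda_k}\|x_{k+1}-x_k\|^2 + \frac{1}{\lambda_k}\langle x_{k+1}-x_k, x_k - z\rangle)$. Since $\lambda_k$ satisfies \eqref{ZO linesearch} by construction, the first inner-product term is nonpositive by \ref{lema4linesearch}, and the polarization identity $2\langle x_{k+1}-x_k, x_k-z\rangle = \|x_{k+1}-z\|^2 - \|x_{k+1}-x_k\|^2 - \|x_k-z\|^2$ collapses the remainder into the telescoping form
\begin{equation*}
F(x_{k+1}) - F(z) \leq \frac{1}{2\lambda_k}\left(\|x_k - z\|^2 - \|x_{k+1} - z\|^2\right), \qquad \forall z.
\end{equation*}
Specializing $z = x_k$ yields $F(x_{k+1}) \leq F(x_k)$, i.e.\ monotone decrease of $F$; specializing $z = x^*$ shows that $\|x_k - x^*\|$ is non-increasing, so each bracketed difference is nonnegative.

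To finish, I would invoke the lower bound $\lambda_k \geq 1/(3L)$ to replace $1/(2\lambda_k)$ by $3L/2$ --- valid in the correct direction precisely because the bracketed difference is nonnegative --- and telescope over $j = 0, \dots, k-1$, so the right-hand side collapses to $\frac{3L}{2}\|x_0 - x^*\|^2$. Combining with the monotonicity $F(x_{j+1}) \geq F(x_k)$ for $j+1 \leq k$ gives $k\,(F(x_k) - F(x^*)) \leq \sum_{j=0}^{k-1}(F(x_{j+1}) - F(x^*)) \leq \frac{3L}{2}\|x_0 - x^*\|^2$, hence $F(x_k) - F(x^*) \leq D/k$ with $D = \frac{3L}{2}\|x_0 - x^*\|^2$, which depends only on $x_0$, $x^*$, and $L$ as claimed.

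The subtle point, rather than the main obstacle, is twofold. First, $G = G^{f}_{\lambda h}(x_k)$ itself varies with the candidate $\lambda$, so the lower-bound verification must hold for each candidate with its own gradient mapping; this is harmless because the descent-lemma/Cauchy--Schwarz estimate is uniform in $G$ (it only used $\|x_k - x^+\| = \lambda\|G\|$). Second, Part~1 is a genuine prerequisite for Part~2: telescoping across distinct stepsizes $\lambda_k$ only works after substituting the uniform bound, and it is exactly the monotonicity of $\|x_k - x^*\|$ that makes this substitution point in the right direction.
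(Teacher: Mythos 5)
Your proposal is correct and takes essentially the same route as the paper: both reduce the linesearch condition, via the descent lemma applied at $x^+$, to the inner-product inequality $\langle G, \nabla f(x_k) - \nabla f(x^+)\rangle \leq \tfrac{1-L\lambda}{2}\|G\|^2$ (the paper bounds the left side by $L\lambda\|G\|^2$ by summing two copies of \eqref{eq1}, you use Cauchy--Schwarz with \eqref{eq2} --- the same bound either way), and then both obtain the rate by dropping the unfavorable term of Lemma~\ref{keylemma}\ref{lema2} through \ref{lema4linesearch}, collapsing to the telescoping form, and averaging using monotone decrease of $F(x_k)$. Your version is in fact slightly more careful than the paper's: you justify replacing $1/(2\lambda_k)$ by a constant via nonnegativity of the bracketed differences and carry the consistent constant $D = \tfrac{3L}{2}\|x_0 - x^*\|^2$, whereas the paper writes $L$ there even though the lower bound $\lambda_k \geq 1/(3L)$ only yields $1/(2\lambda_k) \leq 3L/2$.
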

\begin{proof}
First, to show that $\lambda_k$ is bounded from below by ${1}/{3L}$, we just need to show that the condition in the stepsize rule always is satisfied by $\lambda_k = 1/{3L}$. Considering the smoothness of $f$, we can write the inequality \eqref{eq1} for the pair $(x,x^+)$ where $x^+ = x - \lambda G^{f}_{\lambda h}(x)$, which arrives at the two inequalities
\begin{align*}
f(x^+) &\leq f(x) - \langle \nabla f(x),\lambda G^{f}_{\lambda h}(x) \rangle + \dfrac{L\lambda ^2}{2} \|G^{f}_{\lambda h}(x)\|^2, \\
f(x) &\leq f(x^+) + \langle \nabla f(x^+),\lambda G^{f}_{\lambda h}(x) \rangle + \dfrac{L\lambda ^2}{2} \|G^{f}_{\lambda h}(x)\|^2.
\end{align*}
Adding the two sides of the above inequalities leads to
\begin{equation}\label{eq4}
\langle G^{f}_{\lambda h}(x), \nabla f(x^+) - \nabla f(x) + {L\lambda}G^{f}_{\lambda h}(x) \rangle \geq 0, \qquad \forall \, \lambda \in \mathbb{R}_+, \, x \in \mathbb{R}^n.   
\end{equation}
Due to the smoothness of $\phi(\cdot)$, the stepsize rule in \ref{alg:Algorithm1} is satisfied whenever
\begin{align*}
    \big\langle G^{f}_{\lambda h}(x),\, \nabla f(x^+) - \nabla f(x) + \tfrac{1 - L\lambda}{2} \, G^{f}_{\lambda h}(x) \big\rangle \geq 0,
\end{align*}
which coincides with the inequality condition in \eqref{eq4} when $\lambda = 1/3L$, showing that the stepsize rule in \ref{alg:Algorithm1} is always satisfied by choosing $\lambda = 1/3L$.
Next, we prove the convergence of \ref{alg:Algorithm1}. By putting $z = x_k$ in the inequality \ref{eq3} we have
\begin{equation}\label{conv-analysis}
F(x_{k+1}) - F(x_k) \leq - \dfrac{\lambda_k}{2} \|G^{f}_{\lambda_k h}(x_k)\|^2  
\end{equation}
Telescoping the above inequalities from $k=0$ to $k=\infty$ and using the fact that $\lambda_k \geq 1/3L$ gives us
\begin{equation*}
    F(x^\infty) - F(x_0) \leq -\dfrac{1}{6L}\sum_{i=0}^\infty \|G^{f}_{\lambda_i h}(x_i)\|^2
\end{equation*}
According to Assumption \ref{assum2} that $F(x)$ has a finite minimizer, we conclude $\|G^{f}_{\lambda h}(x^\infty)\|^2 \to 0$ which shows the convergence from \ref{lema3}. To drive the rate of convergence we start from \eqref{conv-analysis} with $z = x^*$.
\begin{equation}\label{aux-conv}
   F(x_{k+1}) \leq F(x^*) - \dfrac{1}{2\lambda_k} \|x_{k+1} - x_k\|^2 - \dfrac{1}{\lambda_k}\langle x_{k+1} - x_k, x_k -  x^* \rangle  
\end{equation}
By adding and subtracting $\dfrac{1}{\lambda_k} \langle x_{k+1}, x_k \rangle$ and $\dfrac{1}{\lambda_k} \langle x_k, x_k \rangle$ terms and expanding the right-hand side of \eqref{aux-conv} we have  
\begin{equation}\label{aux-conv2}
     F(x_{k+1}) \leq F(x^*) + \dfrac{1}{2\lambda_k}\Big(\|x_k - x^*\|^2 - \|x_{k+1} - x^*\|^2\Big)
\end{equation}
Using \eqref{aux-conv2} and the lower bound of $\lambda_k$, we can write
\begin{equation*}
    F(x_{k+1}) \leq F(x^*) + L\Big(\|x_k - x^*\|^2 - \|x_{k+1} - x^*\|^2\Big)
\end{equation*}
Summing above inequality from $k = 1$ to $k = T$ gives us
\begin{equation*}
    \sum_{k=1}^{T}\Big(F(x_k) - F(x^*)\Big) \leq L\Big(\|x_0 - x^*\|^2 - \|x_T - x^*\|^2\Big) \leq L\|x_0 - x^*\|^2
\end{equation*}
Because the sequence of $F(x_i)$ is nondecreasing, we can apply Jensen's inequality to arrive at
\begin{align*}
    F(x_k) - F(x^*) \leq \dfrac{1}{k}\sum_{i=1}^{k}\Big(F(x_i) - F(x^*)\Big) \leq L\Big(\|x_0 - x^*\|^2 - \|x_k - x^*\|^2\Big) \leq \dfrac{L\|x_0 - x^*\|^2}{k} = \dfrac{D}{k}.
\end{align*}
\end{proof}
Theorem \ref{Non-accelerated adaptive stepsize} can be extended to a slightly more general setting where $f$ is locally smooth.
%\begin{comment} sub-optimality bound
\begin{corollary}[Locally smooth function]
The sequence $\left (x_k \right)_{k \in \mathbb{N}}$ generated by \eqref{stepsize-search direction} for the direction and stepsize rule \ref{alg:Algorithm1} ensures the boundedness of sequence and the sub-optimality bound $F(x_k) - F(x^*) \leq \dfrac{D}{k}$, where $f$ is locally smooth function in \eqref{composite problem}.
\end{corollary}
\begin{proof}
This extension essentially indicates that the smoothness condition concerning the term $f$ reflected in the constant $L$ is only required over a compact set. To this end, we recall the proposed algorithm is monotone thanks to the inequality \eqref{conv-analysis}. That is, $f(x_{k+1}) \leq f(x_k)$ for all $k$, implying that the entire sequence of $\left (x_k \right)_{k \in \mathbb{N}}$ remains in the sublevel set of $\{x \in \mathbb{R}^n ~:~ f(x) \le f(x_0)\}$. Note also that the regularity condition in Assumption \ref{assum2} implies that the levelset of the objective function $f$ is compact. This concludes the desired assertion. 
\end{proof}
\begin{remark}[Approximate adaptive stepsize rule] \label{stepsize appr}
    The stepsize rule \ref{alg:Algorithm1} involves a scalar optimization problem. In practice, we use a simple backtracking method to approximate the optimal solution. To this end, we start with a initial $\lambda_0$ and scales it with a coefficient $C = (0,1)$ as 
\begin{equation*}
    \lambda_k = \max \left \{\lambda_0 C^i \,|\,\, \phi_k(2\lambda) \leq \phi_k(\lambda) - \lambda \langle G^{f}_{\lambda h}(x_k), \nabla f(x_k) \rangle + \dfrac{\lambda}{2}\|G^{f}_{\lambda h}(x_k))\|^2, \,\, i \in \mathbb{N}\right\}.
\end{equation*}
The above approximate solution is guaranteed to satisfy $\lambda_k \geq \dfrac{C}{3L}$.
\end{remark}
%%%%%%%%%%%%%%%%%%%%%%%%%%%%%%%%%%%%%%%%%%%%%%%%%%%%%%%%%%%%%%%%%%%%%%%%%%%%%%%%%%%%%%%%%%%%%%%%%%%%%%%%%%%%%%%%%%%%%%%%%%%%%%%%%%%%%%%%%%%%%%%%%%%%%%%%%%%%%%%%%%%%%%%%%%%%%%
\subsection{Non-accelerated smooth minimization}\label{Non-accelerated smooth minimization}
In this subsection, we refine the result proposed in the previous subsection for (locally)~smooth minimization, i.e., $h(x) = 0$ in~\eqref{composite problem}. Specifically, the analysis in Theorem~\ref{theorem1} can also be applied to a (locally)~smooth minimization. Note that in the smooth case, $G^{f}_{\lambda h}(x) = \nabla f(x)$ and the linesearch in \ref{alg:Algorithm1} is simplified to finding the largest $\lambda$ satisfying $\phi(2\lambda) \leq \phi(\lambda) + \dfrac{\lambda}{2}\phi^\prime(0)$. The following corollary explain the results when we invoke \ref{alg:Algorithm1} to minimize the (locally)~smooth function.
\begin{corollary}[Convergence of smooth minimization] \label{corollary-smooth-nonaccelerated}
Let the function~$F$ in \eqref{composite problem} be a smooth (i.e., $h(x) = 0$). Then, the stepsize rule in \ref{alg:Algorithm1} reduces to \eqref{partial stepsize rule} while ensuring Theorem \ref{theorem1} results of the uniform stepsize lower bound $\lambda_k \geq {1}/{3L}$ and the sub-optimality bound $F(x_k) - F(x^*) \leq \dfrac{D}{k}$.
\end{corollary}
%\end{comment}
%\textbf{Further discussion} (Connection with Polyak stepsize \eqref{Polyak stepsize}):
\paragraph{{\bf Comparison of different stepsizes regime:}} 
We close this subsection by highlighting a common feature shared between the Polyak stepsize~\eqref{Polyak stepsize}, Armijo stepsize rule \eqref{Armijo-rule}, and our proposed stepsize rule~\eqref{partial stepsize rule}. These three stepsizes, denoted by $\lambda$, respect the following inequalities 
\begin{subequations}\label{G_interval}
\begin{align}\label{inerval_1}
    \dfrac{1}{3L} < \dfrac{1}{2L} \leq \lambda \leq c\,\dfrac{f(y_1) - f(y_2)}{\|\nabla f(x_k)\|^2},
\end{align}
where the points~$y_1$ and $y_2$ potentially depend on $x_k$ and the stepsize $\lambda$. More specifically, we have 
\begin{align}
\label{interval_2}
\begin{cases}
\begin{array}{llllll}
\text{Polyak}\,\eqref{Polyak stepsize}:&y_1 = x_k,  &y_2 &= x^*, &c &= 1,\\
\text{Armijo}\,\eqref{Armijo-rule}:&y_1 = x_k,  &y_2 &= x_k - \lambda \nabla f(x_k), &c &> 1, \\
\text{Our stepsize}\,\eqref{partial stepsize rule}:&y_1 = x_k - \lambda \nabla f(x_k),  &y_2 &= x_k - 2\lambda \nabla f(x_k), &c &= 2. 
\end{array}
\end{cases}
\end{align}
\end{subequations}
{The common lower bound $1/(2L)$ in \eqref{inerval_1} is well known for Polyak- and Armijo-type rules. Our proposed rule in Theorem~\ref{theorem1} instead achieves a lower bound of $1/(3L)$, which is smaller than the bound of the existing linesearch rules. While larger stepsizes are commonly expected to yield faster convergence, our numerical experiments across multiple problem classes show that the proposed linesearch consistently outperforms existing methods. A possible explanation for this is that the theoretical bounds are typically conservative: in realistic problems, the local geometry often permits stepsizes much larger than those implied by the global smooth constant. Our linesearch procedure exploits this favorable curvature by accepting substantially larger steps in most iterations, leading to faster objective decrease and fewer gradient/proximal evaluations overall.}

Notice that the optimal choice of the stepsizes is the upper bound of the admissible interval \eqref{inerval_1}. It is also important to note that in the case of Armijo \eqref{Armijo-rule} and our proposed choice \eqref{partial stepsize rule}, this upper bound also depends on $\lambda$ (see \eqref{interval_2}), making the stepsize rule implicit. Finally, we wish to draw attention to the choice of the upper bound between Armijo and our proposed approach in \eqref{inerval_1}: One can inspect that the proposed rule \eqref{partial stepsize rule} looks one step ahead of Armijo in the direction of $\nabla f$. This may explain why the proposed rule \eqref{partial stepsize rule} performs numerically better, as it can approximate the curvature of the function~$f$ by ``looking one step further" than Armijo. Finally, it is also worth noting that the recent work~\cite{Malitsky2020} offers an adaptive stepsize rule with the lower bound $1/2L$ and an explicit upper bound (i.e., independent of~$\lambda$) that depends on the gradient of two successive iterations (see \eqref{malitsky ad-stepsize}).
%%%%%%%%%%%%%%%%%%%%%%%%%%%%%%%%%%%%%%%%%%%%%%%%%%%%%%%%%
\section{Accelerated Adaptive Stepsize}\label{Accelerated adaptive stepsize}
This section pertains to the analysis of convergence for update direction $d_k$ in accelerated methods. The general form of our accelerated minimization algorithm is provided in \ref{alg:Algorithm2}, which follows the accelerated proximal gradient descent method with the difference in the choice of stepsize $\lambda$ \cite{Nesterov1983, Nesterov2018, bubeck2015convex}. Differently from \cite{Nesterov1983,Beck2009}, the stepsize is not fixed, and in our numerical experience, it does not strictly decrease in many iterations, which partially explains the acceleration of the algorithm up to the optimal theoretical bound $\mathcal{O}(k^{-2})$.
\begin{comment}
 \begin{equation*}\label{alg:Algorithm1}
\tag{Alg.\,2}
        \begin{cases}
            \beta_k = \dfrac{1+\sqrt{1+4\beta_{k-1}^2}}{2}, \,\, \gamma_k = \dfrac{1-\beta_k}{\beta_{k+1}}, \,\, x_0 = x_1, \,\, \beta_1 = 0,\\
            \lambda_k = \arg\max\limits_{\lambda \in \mathbb{R}} \left\{\lambda \,|\,\, \phi(2\lambda) \leq \phi(\lambda) - \lambda \langle G^{f}_{\lambda h}(x), \nabla f(x) \rangle + \dfrac{\lambda}{2}\|G^{f}_{\lambda h}(x))\|^2, \, \lambda \leq \lambda_{k-1} \right\},\\
            d_k = -\gamma_k \left(\dfrac{\lambda_{k-1}}{\lambda_k} d_{k-1} - (1 + \dfrac{1}{\gamma_k})G^{f}_{\lambda_k h}(x_k) + \dfrac{\lambda_{k-1}}{\lambda_k}\nabla G^{f}_{\lambda_{k-1} h}(x_{k-1}) \right). 
            %G^{f}_{\lambda h}(x) := \dfrac{1}{\lambda}(x - \mathrm{prox}_{\lambda h}(x - \lambda \nabla f(x))), \,\, x_0 = x_1, \,\, \beta_1 = 0.
      %\Comment{momentum term}
        \end{cases}
\end{equation*}
\end{comment}
\begin{theorem}[Accelerated adaptive stepsize]\label{theorem3}
Consider the function~$F$ in \eqref{composite problem} as a composition of a smooth function $f(x)$ and a prox-friendly function $h(x)$. The sequence $\left (x_k \right)_{k \in \mathbb{N}}$ generated by \eqref{stepsize-search direction} with the direction and stepsize rule 
 \begin{equation*}\label{alg:Algorithm2}
\tag{Alg.\,2}
        \begin{cases}
            \beta_k = \dfrac{1+\sqrt{1+4\beta_{k-1}^2}}{2}, \,\, \gamma_k = \dfrac{1-\beta_k}{\beta_{k+1}}, \,\, x_0 = x_1, \,\, \beta_1 = 0,\\
            \lambda_k = \arg\max\limits_{\lambda \in \mathbb{R}} \left\{\lambda \,|\,\, \phi(2\lambda) \leq \phi(\lambda) - \lambda \langle G^{f}_{\lambda h}(x), \nabla f(x) \rangle + \dfrac{\lambda}{2}\|G^{f}_{\lambda h}(x))\|^2, \, \lambda \leq \lambda_{k-1} \right\},\\
            d_k = -\gamma_k \left(\dfrac{\lambda_{k-1}}{\lambda_k} d_{k-1} - (1 + \dfrac{1}{\gamma_k})G^{f}_{\lambda_k h}(x_k) + \dfrac{\lambda_{k-1}}{\lambda_k} G^{f}_{\lambda_{k-1} h}(x_{k-1}) \right). 
            %G^{f}_{\lambda h}(x) := \dfrac{1}{\lambda}(x - \mathrm{prox}_{\lambda h}(x - \lambda \nabla f(x))), \,\, x_0 = x_1, \,\, \beta_1 = 0.
      %\Comment{momentum term}
        \end{cases}
\end{equation*}
ensures the uniform stepsize lower bound $\lambda_k \geq {1}/{3L}$ and the sub-optimality bound $F(x_k) - F(x^*) \leq \dfrac{D'}{k^2}$ where $L$ is the smoothness constant of $f$, and $D'$ is a constant that only depends on the initial condition $x_0$, the optimal solution $x^*$, and the constant~$L$.
\end{theorem}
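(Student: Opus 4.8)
The plan is to prove the two claims separately, reusing the machinery already assembled for the non-accelerated case and then layering a Lyapunov argument on top. For the uniform stepsize bound $\lambda_k \geq 1/3L$, I would argue exactly as in Theorem~\ref{theorem1}: the smoothness inequality \eqref{eq4}, evaluated at $\lambda = 1/3L$, shows that $1/3L$ always satisfies the linesearch condition appearing in \ref{alg:Algorithm2}, so it belongs to the feasible set of the $\arg\max$. The only genuinely new ingredient is the extra constraint $\lambda \leq \lambda_{k-1}$, which I would dispatch by induction: assuming $\lambda_{k-1} \geq 1/3L$ (with a base case $\lambda_0 \geq 1/3L$), the cap $\lambda_{k-1}$ does not exclude $1/3L$ from the feasible set, hence $\lambda_k \geq 1/3L$ as well. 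As a by-product, $(\lambda_k)$ is non-increasing and bounded below, so it converges; the monotonicity $\lambda_{k-1}/\lambda_k \geq 1$ is precisely the property the acceleration analysis will consume below.

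For the rate, the engine is again the increment bound \ref{lema2} combined with the zero-order linesearch \ref{lema4linesearch}, which certifies that the ``bad'' inner-product term is nonpositive at every iteration. After discarding it, \ref{lema2} reads, for the prox step $x^+ = x - \lambda G^{f}_{\lambda h}(x)$ and any anchor $z$,
\[
F(x^+) - F(z) \leq -\tfrac{1}{2\lambda}\norm{x^+ - x}^2 - \tfrac{1}{\lambda}\inner{x^+ - x}{x - z}.
\]
First I would make the momentum structure explicit: unrolling the recursion for $d_k$ and using $\lambda_{k-1} d_{k-1} = x_k - x_{k-1}$, I would introduce an auxiliary momentum sequence $z_k$ and rewrite \ref{alg:Algorithm2} in the standard two-sequence accelerated form, in which the gradient step $x_k - \lambda_k G^{f}_{\lambda_k h}(x_k)$ plays the role of the main iterate and $x_{k+1}$ is its Nesterov extrapolation. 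The identities $\beta_k^2 - \beta_k = \beta_{k-1}^2$ and $\gamma_k = (1-\beta_k)/\beta_{k+1}$ are exactly what make this rewriting consistent.

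Next I would apply the displayed inequality at the two anchors $z = x_k$ and $z = x^*$, take the convex combination dictated by $\beta_k$, and telescope against the weights $\beta_k^2$. This should produce a Lyapunov/potential function of the form
\[
V_k = \beta_k^2\,\lambda_k\,\bigl(F(x_k) - F(x^*)\bigr) + \tfrac{1}{2}\norm{z_k - x^*}^2,
\]
for which the target inequality is $V_{k+1} \leq V_k$. Granting this, from $\beta_k \geq (k-1)/2$ and $\lambda_k \geq 1/3L$ one obtains $F(x_k) - F(x^*) \leq V_0 / (\beta_k^2 \lambda_k) = \mathcal{O}(k^{-2})$, with the constant $D'$ absorbing $V_0$ and $L$.

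The hard part is establishing $V_{k+1} \leq V_k$ in the presence of the varying stepsize. With a constant $\lambda$ this is the textbook FISTA telescoping; here the weight $\beta_k^2 \lambda_k$ changes with $k$, and the cross terms produced when passing from $z = x_k$ to $z = x^*$ carry ratios $\lambda_{k-1}/\lambda_k$. The crucial observation is that the cap $\lambda_k \leq \lambda_{k-1}$ forces $\lambda_{k-1}/\lambda_k \geq 1$, which is exactly the sign needed so that the stepsize-mismatch terms can be bounded and the potential stays non-increasing — this is where the monotonicity established in the first part is used. I would therefore organize the algebra so that every factor $\lambda_{k-1}/\lambda_k$ multiplies a manifestly nonnegative quantity, and then verify that the coefficient bookkeeping closes by invoking the recursion $\beta_k^2 - \beta_k = \beta_{k-1}^2$.
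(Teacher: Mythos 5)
Your proposal is correct and takes essentially the same approach as the paper's proof: the same feasibility argument via \eqref{eq4} (with the cap $\lambda \le \lambda_{k-1}$ handled by noting $1/3L$ always remains feasible) for the bound $\lambda_k \ge 1/3L$, and the same Lyapunov construction for the rate — the increment bound \ref{lema2} applied at the two anchors $z = y_k$ and $z = x^*$, combined with the $\beta_k$-weights, the identity $\beta_{k-1}^2 = \beta_k^2 - \beta_k$, and the monotonicity $\lambda_k \le \lambda_{k-1}$, yielding the non-increasing potential $\lambda_k\beta_k^2\bigl(F(y_{k+1})-F(x^*)\bigr) + \tfrac{1}{2}\bigl\|\beta_{k+1}x_{k+1}-(\beta_{k+1}-1)y_{k+1}-x^*\bigr\|^2$, which telescopes to the $\mathcal{O}(k^{-2})$ bound exactly as you describe. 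The only difference in execution is that the paper clears denominators by multiplying through by $\lambda_k\beta_k$ before comparing consecutive potential values, so no ratios $\lambda_{k-1}/\lambda_k$ ever appear in the bookkeeping; monotonicity of the stepsize is invoked exactly once, to replace $\lambda_k$ by $\lambda_{k-1}$ in front of the nonnegative subtracted term $\beta_{k-1}^2\bigl(F(y_k)-F(x^*)\bigr)$.
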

\begin{proof}
The proof of a first proposition is the same as in Theorem \ref{theorem1}. Specifically, according to stepsize rule in \ref{alg:Algorithm2}, we should first check whether the previous $\lambda_{k-1}$ satisfies the linesearch condition. Since the inequality \eqref{eq4} holds for $\lambda = {1}/{3L}$ (according to the linesearch condition in \ref{alg:Algorithm2}), we have the guarantee that $\lambda_k \geq {1}/{3L}$. Regarding the convergence analysis bound, in the first step, we invoke the increment inequality of \ref{eq3} in Lemma~\ref{keylemma} for two cases of $z = y_{k}$ and $z = x^*$, which arrives at
\begin{subequations}
\begin{align}
    & F(y_{k+1}) - F(y_{k}) \leq -\dfrac{1}{2\lambda_k} \|y_{k+1} - x_k\|^2 - \dfrac{1}{\lambda_k}\langle y_{k+1} -x_k, x_k - y_{k} \rangle, \label{eq9} \\
    & F(y_{k+1}) - F(x^*) \leq -\dfrac{1}{2\lambda_k} \|y_{k+1} - x_k\|^2 - \dfrac{1}{\lambda_k}\langle y_{k+1} -x_k, x_k - x^*\rangle. \label{eq10}
\end{align}
\end{subequations}
Let us define $\delta_k := F(y_{k}) - F(x^*)$. Then, multiplying \eqref{eq9} by $\beta_k - 1$, and adding the two sides of the inequality to \eqref{eq10} yields
\begin{equation*}
    \beta_k \delta_{k+1} - (\beta_k - 1)\delta_k \leq - \dfrac{\beta_k}{2\lambda_k}\|y_{k+1} - x_k\|^2 - \dfrac{1}{\lambda_k}\langle y_{k+1} - x_k, \beta_k x_k - (\beta_k - 1)y_{k} - x^* \rangle.
\end{equation*}
Multiplying the above inequality by $\lambda_k \beta_k$ and considering $\beta_{k-1}^2 := \beta_{k}^2 - \beta_{k}$ and using the fact that $\lambda_k \leq \lambda_{k-1}$, we have
\begin{equation}
\begin{split}\label{eq11}
\lambda_k \beta_{k}^2 \delta_{k+1} - \lambda_{k-1} \beta_{k-1}^2 \delta_{k} \leq - \dfrac{1}{2} \Big(\|\beta_k (y_{k+1} - x_k)\|^2 + 2\beta_k \langle y_{k+1} - x_k, \beta_k x_k - (\beta_k - 1)y_{k} - x^* \rangle \Big).   
%\beta_k = \dfrac{1+\sqrt{1+4\beta_{k-1}^2}}{2}
\end{split}
\end{equation}
The right-hand side of \eqref{eq11} can be equivalently written as
\begin{align}\label{eq12}
& \blank{-1.3cm} \|\beta_k (y_{k+1} - x_k)\|^2 + 2\beta_k \langle y_{k+1} - x_k, \beta_k x_k - (\beta_k - 1)y_{k} - x^* \rangle = \nonumber \\
& \blank{4cm} \|\beta_k y_{k+1} - (\beta_k - 1)y_{k} - x^*\|^2 - \|\beta_k x_k - (\beta_k - 1)y_{k} - x^*\|^2.   
\end{align}
Substituting \eqref{eq12} into \eqref{eq11} and rearranging the inequality lead to
\begin{equation}\label{eq13}
\lambda_k \beta_{k}^2 \delta_{k+1} - \lambda_{k-1} \beta_{k-1}^2 \delta_{k} \leq - \dfrac{1}{2} \Big(\|\beta_k y_{k+1} - (\beta_k - 1)y_{k} - x^*\|^2 - \|\beta_k x_k - (\beta_k - 1)y_{k} - x^*\|^2 \Big).
\end{equation}
Using the update rule of $x_{k+1}$ on the right-hand side of \eqref{eq13} reduces to
\begin{equation}\label{eq14}
\beta_k y_{k+1} - (\beta_k - 1)y_{k} - x^*= \beta_{k+1} x_{k+1} - (\beta_{k+1} - 1)y_{k+1} - x^*,  
\end{equation}
 which is equivalent to
\begin{equation*}
x_{k+1}=\dfrac{(-1 + \beta_k + \beta_{k+1})}{\beta_{k+1}}y_{k+1} + \dfrac{1 - \beta_k}{\beta_{k+1}}y_{k}.  
\end{equation*}
By combining (\ref{eq13}) and (\ref{eq14}) and defining $u_k = \beta_k x_k - (\beta_k - 1) y_{k} - x^*$, we obtain
\begin{equation}\label{eq15}
\lambda_k \beta_{k}^2 \delta_{k+1} - \lambda_{k-1} \beta_{k-1}^2 \delta_k \leq \dfrac{1}{2}(\|u_k\|^2 - \|u_{k+1}\|^2).
\end{equation}
We note that the inequality~\eqref{eq15} can be viewed as the so-called Lyapunov function as it indicates that the term $\lambda_k \beta_{k}^2 \delta_{k+1} + \|u_{k+1}\|^2/2$ is monotonically decreasing in each iteration. Summing inequalities (\ref{eq15}) from $ k = 1$ to $k = T$, one obtains 
\begin{equation*}
\lambda_{T}\beta_{T}^2 \delta_{T+1} - \lambda_0\beta_{0}^2 \delta_{1} \leq \dfrac{1}{2}\|u_1\|^2 - \dfrac{1}{2}\|u_{T+1}\|^2 \leq \dfrac{1}{2}||u_1||^2.
\end{equation*}
Setting $\beta_0=0$ in the above equation yields $\delta_{T+1} \leq \dfrac{\|u_1\|^2}{2 \lambda_{T}\beta_{T}^2}$.
\begin{comment} 
\begin{equation*}
\delta_{T+1} \leq \dfrac{\|u_1\|^2}{2 \lambda_{T}\beta_{T}^2}   
\end{equation*}
\end{comment}
Finally, using Remark \ref{stepsize appr} and the growing rate of $\beta_k \geq k/2$, we deduce the desired result
\begin{equation*}
 F(y_{T+1}) - F(x^*) = \delta_{T+1} \leq \dfrac{6L\|u_1\|^2}{C T^2} \leq \dfrac{D'}{T^2}. %\approx \mathcal{O}\left( \frac{1}{T^2} \right).  
\end{equation*}
\end{proof}
We can refine the result proposed in Theorem \ref{theorem3} for smooth minimization, where the non-smooth part, $h$, in \eqref{composite problem} is $h(x) = 0$. In this case, $G^{f}_{\lambda h}(x) = \nabla f(x)$, and the linesearch in the stepsize rule \ref{alg:Algorithm2} is simplified to find the largest positive scalar $\lambda$ that satisfies $\phi(2\lambda) \leq \phi(\lambda) + \frac{\lambda}{2}\phi^\prime(0)$. This is formalized in the next Corollary.
\begin{corollary}[Accelerated smooth minimization] \label{remark for smooth}
Let the function~$F$ in \eqref{composite problem} be a smooth (i.e., $h(x) = 0$). Then, the stepsize rule in \ref{alg:Algorithm2} reduces to \eqref{partial stepsize rule} while ensuring Theorem \ref{theorem3} results of the uniform stepsize lower bound $\lambda_k \geq {1}/{3L}$ and the sub-optimality bound $F(x_k) - F(x^*) \leq \dfrac{D'}{k^2}$.
\end{corollary}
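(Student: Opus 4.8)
The plan is to recognize Corollary~\ref{remark for smooth} as a direct specialization of Theorem~\ref{theorem3} to the case $h \equiv 0$, so that the entire task reduces to verifying that the algorithmic ingredients of \ref{alg:Algorithm2} collapse to their stated smooth forms; the convergence guarantees then transfer with no genuinely new analysis.

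First I would evaluate the proximal operator of the zero function. From the definition \eqref{prox-operator}, setting $h \equiv 0$ gives $\mathrm{prox}_{\lambda h}(y) = \arg\min_u \tfrac{1}{2}\norm{u-y}^2 = y$, the identity map. Inserting this into the gradient mapping \eqref{grad_mapping} yields $G^{f}_{\lambda h}(x) = \tfrac{1}{\lambda}\bigl(x - (x - \lambda\nabla f(x))\bigr) = \nabla f(x)$, confirming that the gradient mapping reduces to the ordinary gradient in the smooth case.

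Next I would substitute $G^{f}_{\lambda h}(x) = \nabla f(x)$ into the linesearch constraint appearing in \ref{alg:Algorithm2}. The two gradient-dependent terms combine as $-\lambda \inner{\nabla f(x)}{\nabla f(x)} + \tfrac{\lambda}{2}\norm{\nabla f(x)}^2 = -\tfrac{\lambda}{2}\norm{\nabla f(x)}^2$. Using $\phi_k(\lambda) = f(x_k - \lambda\nabla f(x_k))$ from \eqref{phi func}, the chain rule gives $\phi_k'(0) = -\norm{\nabla f(x_k)}^2$, so the constraint becomes $\phi(2\lambda) \leq \phi(\lambda) + \tfrac{\lambda}{2}\phi'(0)$, which is precisely the partial rule \eqref{partial stepsize rule}. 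I would also record that the momentum direction $d_k$ in \ref{alg:Algorithm2} specializes consistently, each gradient-mapping term being replaced by the corresponding gradient.

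Finally, since $F = f$ with $h \equiv 0$ satisfies every hypothesis of Theorem~\ref{theorem3} (smoothness of $f$, and trivial prox-friendliness of the zero function), both conclusions---the uniform lower bound $\lambda_k \geq 1/3L$ and the accelerated rate $F(x_k) - F(x^*) \leq D'/k^2$---follow verbatim with the same constant $D'$. There is no substantive obstacle here; the only care needed is the bookkeeping confirming that every gradient-mapping term in \ref{alg:Algorithm2} reduces correctly, after which Theorem~\ref{theorem3} applies as a black box.
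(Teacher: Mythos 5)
Your proposal is correct and follows essentially the same route as the paper, which treats this corollary as an immediate specialization: since $\mathrm{prox}_{\lambda h}$ is the identity when $h\equiv 0$, the gradient mapping $G^{f}_{\lambda h}(x)$ collapses to $\nabla f(x)$, the linesearch condition simplifies via $-\lambda\|\nabla f\|^2 + \tfrac{\lambda}{2}\|\nabla f\|^2 = \tfrac{\lambda}{2}\phi'(0)$ to the rule \eqref{partial stepsize rule}, and Theorem~\ref{theorem3} is then invoked as a black box. The paper in fact provides no separate proof beyond exactly this observation in the preceding paragraph, so your more explicit bookkeeping (prox of the zero function, the chain-rule identity $\phi_k'(0)=-\|\nabla f(x_k)\|^2$, and the consistent specialization of the momentum direction) matches and slightly elaborates the intended argument.
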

\section{Numerical Results}\label{Numerical results}
We illustrate the performance of our proposed stepsize rules~\ref{alg:Algorithm1} and \ref{alg:Algorithm2} in three major problem classes widely used in operations research, machine learning, and systems and control:  

\begin{enumerate}[leftmargin=8mm, itemsep=1pt, topsep = 0mm]
    \item \textbf{Smooth functions}, including the specific objective functions 
    \begin{enumerate}[label=(\roman*), leftmargin=8mm, itemsep=1pt, topsep = 0mm]
        \item logistic regression, motivated by classification tasks in machine learning problems~\cite{jiang2020online}, 
        
        \item quadratic programming, motivated by portfolio optimization and resource allocation problems~\cite{huang2025restarted, mittal2020robust, mehrotra1990algorithm}),  
        
        \item log-sum-exp, motivated by discrete-choice modeling and entropy-regularized transport, e.g., multinomial logit model and softmax demand models~\cite{feng65technical, qu2025sinkhorn,liu2020assortment}, and 
        
        \item max-cut, a benchmark for combinatorial optimization and approximate semidefinite programming~\cite{boyd1997semidefinite, nesterov2007smoothing, han2025low}. 
    \end{enumerate}
    
    \item \textbf{Composite minimization}, including the specific objective functions 
    
    \begin{enumerate}[label=(\roman*), leftmargin=8mm, itemsep=1pt, topsep = 0mm]
    
    \item $\ell_1$-regularized least squares, commonly used in high-dimensional statistics for support recovery and in signal processing for sparse signal reconstruction~\cite{neykov2016l1, selesnick2017sparse}), 
    
    \item $\ell_1$-constrained least squares,  in sparse signal recovery with explicit coefficient bounds \cite{gaines2018algorithms}), and (2-iii)~$\ell_1$-regularized logistic regression (sparse classification and feature selection, \cite{bertsimas2021sparse}).
    \end{enumerate}

    \item \textbf{Non-convex minimization}, where we consider a cubic objective function relevant to cubic-regularization in nonconvex optimization and trust-region subproblems~\cite{Nesterov2006, chen2022accelerating}.
\end{enumerate}
%(1)~\textbf{Smooth minimization}: (1-i)~logistic regression (used in classification tasks \cite{jiang2020online}), (1-ii)~quadratic programming (arising in portfolio optimization, robust quadratic programming, and resource allocation \cite{huang2025restarted, mittal2020robust, mehrotra1990algorithm}), (1-iii)~log-sum-exp (appears in discrete-choice modeling and entropy-regularized transport, e.g., multinomial logit model and softmax demand models \cite{feng65technical, qu2025sinkhorn}), and (1-iv)~\textit{max-cut} (a benchmark for combinatorial optimization and approximate semidefinite programming, \cite{boyd1997semidefinite, nesterov2007smoothing, han2025low}), (2)~\textbf{Composite minimization}: (2-i)~$\ell_1$-regularized least squares (commonly used in high-dimensional statistics for support recovery and in signal processing for sparse signal reconstruction \cite{neykov2016l1, selesnick2017sparse}), (2-ii)~$\ell_1$-constrained least squares (used in sparse signal recovery with explicit coefficient bounds \cite{gaines2018algorithms}), and (2-iii)~$\ell_1$-regularized logistic regression (sparse classification and feature selection, \cite{bertsimas2021sparse}). (3)~\textbf{Non-convex minimization}, where we consider a cubic objective function (relevant to cubic-regularization in nonconvex optimization and trust-region subproblems \cite{Nesterov2006, chen2022accelerating}).

Before presenting the numerical results, let us first start with a practical discussion concerning the implementation of the approximate solution in Remark~\ref{stepsize appr} for the stepsize rule in \ref{alg:Algorithm1}. 
%\subsection{Practical considerations for initialization} \label{Further discuss}
The backtracking in Remark~\ref{stepsize appr} uses an initial constant $\lambda_0$ to approximate the proposed stepsize rule in \ref{alg:Algorithm1}. On the one hand, if the initial $\lambda_0$ is too large, it takes time to compute the desired stepsize satisfying linesearch condition. On the other hand, a too small $\lambda_0$ deteriorates the convergence speed \cite{Nocedal2006}. To address this issue, we approximate the function $\phi$ in \eqref{phi func} via a quadratic function, denoted by $\widetilde{\phi}$, and suggest the desired $\lambda_0$ as its optimizer; see Figure \ref{fig:img2} for a visual representation of this approximation (cf. Figure \ref{fig:linesearch methods}).  
\begin{figure}[H]
     \centering
           \includegraphics[width=0.65\textwidth]{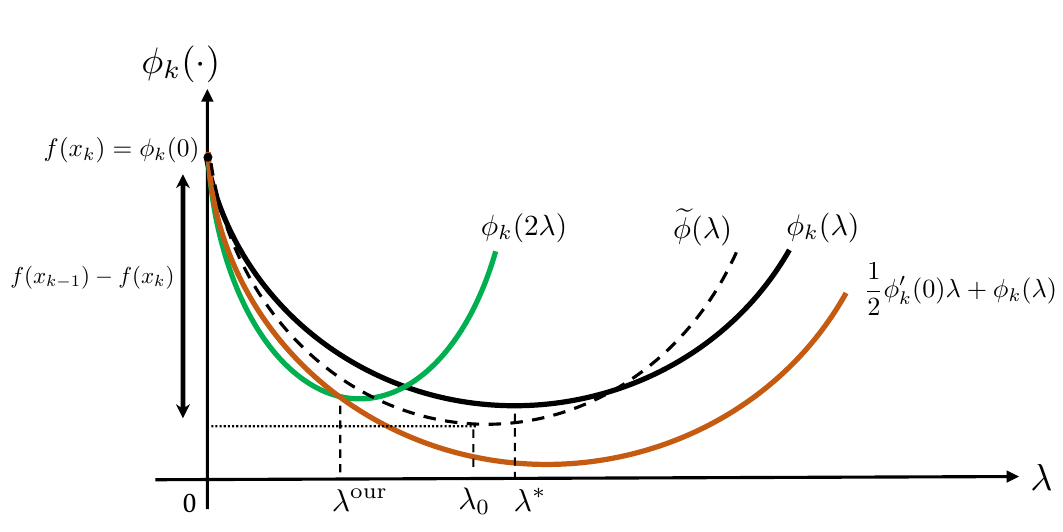}
           \caption{Initial choice of $\lambda_0$ at the $({k+1})^{\text{th}}$ iteration.}
           \label{fig:img2}
\end{figure}
For the approximate function $\widetilde{\phi}$, we suggest the following criteria:
\begin{equation*}
    \begin{rcases}
      & \widetilde{\phi}(0) = \phi(0),\\
      & \widetilde{\phi}^\prime(0) = \phi^\prime(0), \\
      & \widetilde{\phi}(0) - \min \widetilde{\phi}(\lambda) = f(x_{k-1}) - f(x_k)
    \end{rcases}
    \implies \lambda_0 = \arg\min_{\lambda} \widetilde{\phi}(\lambda) = \dfrac{2\Big(f(x_{k-1}) - f(x_k)\Big)}{\|\nabla f(x_k)\|^2}.
\end{equation*}
Notice that the function~$\widetilde{\phi}$ is constructed in a way whose minimum value is $f(x_{k-1}) - f(x_k)$, i.e., the objective improvement of the last iteration. 
To evaluate our performance, we compare our proposed algorithms with the following methods from the literature: For the class of (1)~smooth optimization, we consider the gradient descent (GD)~\cite{Nocedal2006}, Nesterov's accelerated gradient descent (NAGD)~\cite{Nesterov1983}, the recent adaptive gradient descent (AdGD) and its heuristic accelerated version~(AdGD-accel)~\cite{Malitsky2020}. Respectively, for the class of (2)~composite minimization, we consider the nonsmooth counterparts including the proximal gradient descent (ISTA) and its accelerated version~(FISTA)~\cite{Beck2009}, the adaptive golden ratio algorithm~(aGRAAL)~\cite{malitsky2020golden} and adaptive primal-dual algorithm (APDA)~\cite{vladarean2021first}. For (3)~non-convex minimization, we use the existing algorithms from the first class of (1)~smooth minimization. 
%{\color{red} (a)~Gradient descent (GD) or proximal gradient descent (ISTA)~\cite{Beck2009}, (b)  Nesterov's accelerated gradient descent (NAGD) or accelerated proximal gradient descent (FISTA), the state-of-the-art for smooth optimization~\cite{Nesterov1983,Beck2009}, (c) Malitsky's adaptive gradient descent (AdGD) and the heuristic version of that (AdGD-accel)~\cite{Malitsky2020} or adaptive golden ratio algorithm (aGRAAL) \cite{malitsky2020golden} and adaptive primal-dual algorithm (APDA) \cite{vladarean2021first}, a relatively recent method for smooth and composite minimization and the closest in spirit to our proposed method.}
%(a)~Gradient descent (GD) or proximal gradient descent (ISTA), the standard optimization algorithm~\cite{Beck2009}, (b)  Nesterov's accelerated gradient descent (NAGD) or accelerated proximal gradient descent (FISTA), the state-of-the-art method for smooth optimization~\cite{Nesterov1983,Beck2009}, (c) Malitsky's adaptive gradient descent (AdGD) and the heuristic version of that (AdGD-accel)~\cite{Malitsky2020} or adaptive golden ratio algorithm (aGRAAL) \cite{malitsky2020golden} and  adaptive primal-dual algorithm (APDA) \cite{vladarean2021first}, a relatively recent method for smooth and composite minimization and the closest in spirit to our proposed method.

\subsection{Smooth minimization}\label{subsec: smooth-numerics}
This section includes four different classes of smooth functions: 
%\begin{enumerate}[label=(\roman*), itemsep = 0mm, topsep = 0mm, leftmargin = 7mm]
%\item 

\paragraph{{\bf (i) Logistic regression: }} Motivating with the classification problems \cite{jiang2020online}, we consider the logistic regression with quadratic regularization as follows:
\begin{equation}
\label{log-reg}
    \min_{x\in\mathbb{R}^d} \dfrac{1}{N} \sum_{i=1}^{N} \log \left(1+\exp(-b_i a_{i}^\top x)\right) + \dfrac{\gamma}{2} \|x\|^2,
\end{equation}
where $N$ is a number of data, $A = [a_1|a_2|\cdots|a_n]\in \mathbb{R}^{d\times N}$ and $\{b_i\}_{i=1}^N \in \mathbb{R}$ are the features of data, and $\gamma$ is a regularization parameter proportional to ${1}/{N}$. In our experiments, The test data is generated as follows: the matrix $A \in \mathbb{R}^{N\times N}$ is generated randomly using the standard Gaussian distribution $\mathcal{N}(0, 1)$, where we generate $A$ with $50\%$ correlated columns as $A(:,j+1) = 0.5A(:,j) + \textrm{randn}(:)$. The observed measurement vector $b$ is generated as $b := Ax^{\natural} + \mathcal{N}(0, 0.05)$, where $x^{\natural}$ is generated randomly using $\mathcal{N}(0, 1)$, $d = N = 200$, and we estimate the Lipschitz constant of the gradient by the closed-form expression $L = \|A\|^2 / N + \gamma$ used in GD and NAGD as a stepsize. Figure \ref{fig:img4} reports the results where all algorithms are initialized at the same point chosen randomly.

%\item \textbf{Quadratic programming.} Secondly, we consider quadratic minimization given by
\paragraph{{\bf (ii) Quadratic programming:}} Following recent works on portfolio and robust optimization~\cite{huang2025restarted, mittal2020robust}, we examine the quadratic objective function
\begin{equation}
\label{quad}
    \min_{x\in\mathbb{R}^d} \dfrac{1}{2}x^\top B x + b^\top x,
\end{equation}
where $B\in \mathbb{R}^{d\times d} > 0$ and $b\in \mathbb{R}^d$ are given. This function is smooth with constant $L = \|B\|^2$. The parameters $B$ and $b$ are chosen in the same manner as in the previous part, with the difference that $B = A^\top A$ and $b$ are normalized. Additionally, in our experiments, we set $d = 200$. The results of minimizing this quadratic function are provided in Figure~\ref{fig:img5} where all the methods are initialized at the same point. It is interesting to note that the proposed stepsize rule for accelerated algorithms exhibits similar behavior to NAGD with a comparative performance while outperforming other adaptive algorithms.

%\item \textbf{Log-Sum-Exp.} For the third experiment we consider log-sum-exp problem with regularization given by
\paragraph{{\bf (iii) Log-Sum-Exp:}} In the context of discrete-choice modeling and entropy-regularized transport \cite{feng65technical, qu2025sinkhorn,liu2020assortment}, we consider the log-sum-exp objective function with regularization
\begin{equation}
\label{log-sum-exp}
 \min_{x\in\mathbb{R}^d} \log \left(\sum_{i=1}^{N} \exp(a_{i}^\top x - b_i)\right) + \dfrac{\gamma}{2} \|x\|^2,  
\end{equation}
where $A = [a_1|a_2|\cdots|a_n]\in \mathbb{R}^{d\times N}$ and $\{b_i\}_{i=1}^N \in \mathbb{R}$ are the problem data. The log-sum-exp function is viewed as a smooth approximation of $\max_{x\in\mathbb{R}^{d}} \{a_1 ^\top x - b_1, a_2 ^\top x - b_2, \cdots, a_N ^\top x - b_N\}$. This function can sometimes be ill-conditioned, making the minimization task particularly difficult for the first-order methods. The function is smooth with the smoothness constant $L = \sigma_{max} (A^\top)$ where $\sigma_{max} (A^\top)$ is a maximum singular value of $A^\top$. In the experiment, we set $N = d = 200$, and $A$ and $\{b_i\}_{i=1}^N$ are generated using a similar approach as in the logistic regression part. The results of minimizing the log-sum-exp function are depicted in Figure~\ref{fig:img6} where the proposed accelerated method demonstrates superior efficacy compared to other algorithms.
%%%%%%%%%%%%%%%%%%%%%%%%%%%%%%%%%%%%%%%%
\begin{figure}[t]
    \centering
    \captionsetup{justification=centering}
    \includegraphics[scale=0.30]{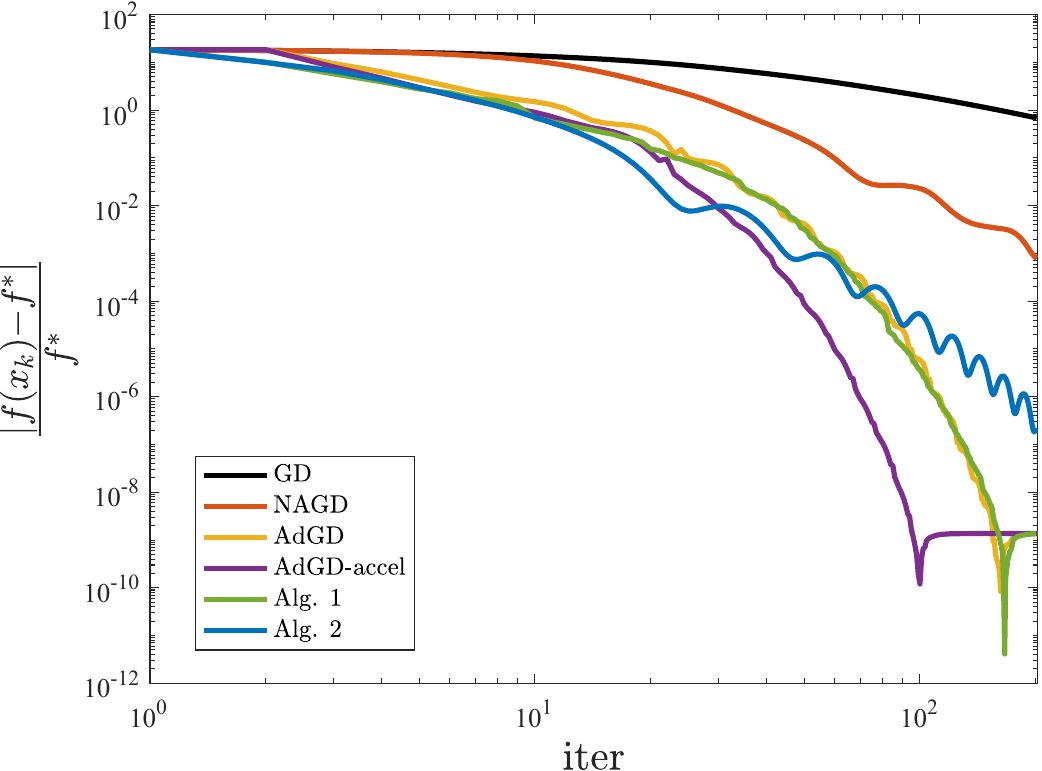}
    \includegraphics[scale=0.30]{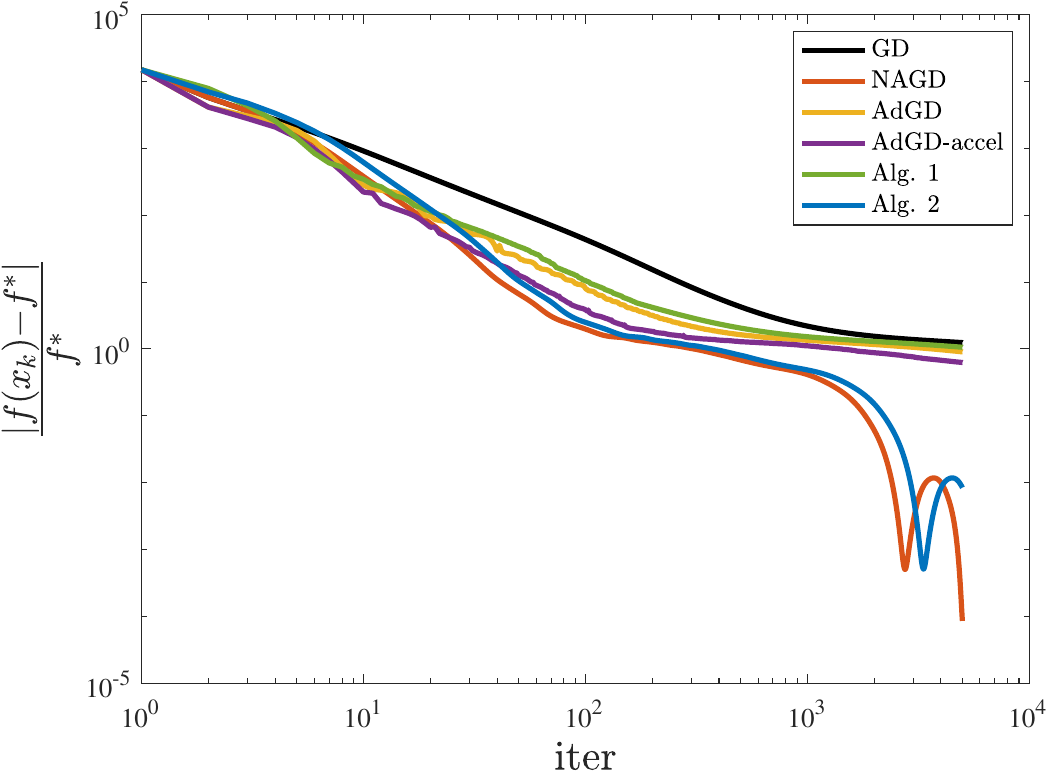}
    \includegraphics[scale=0.30]{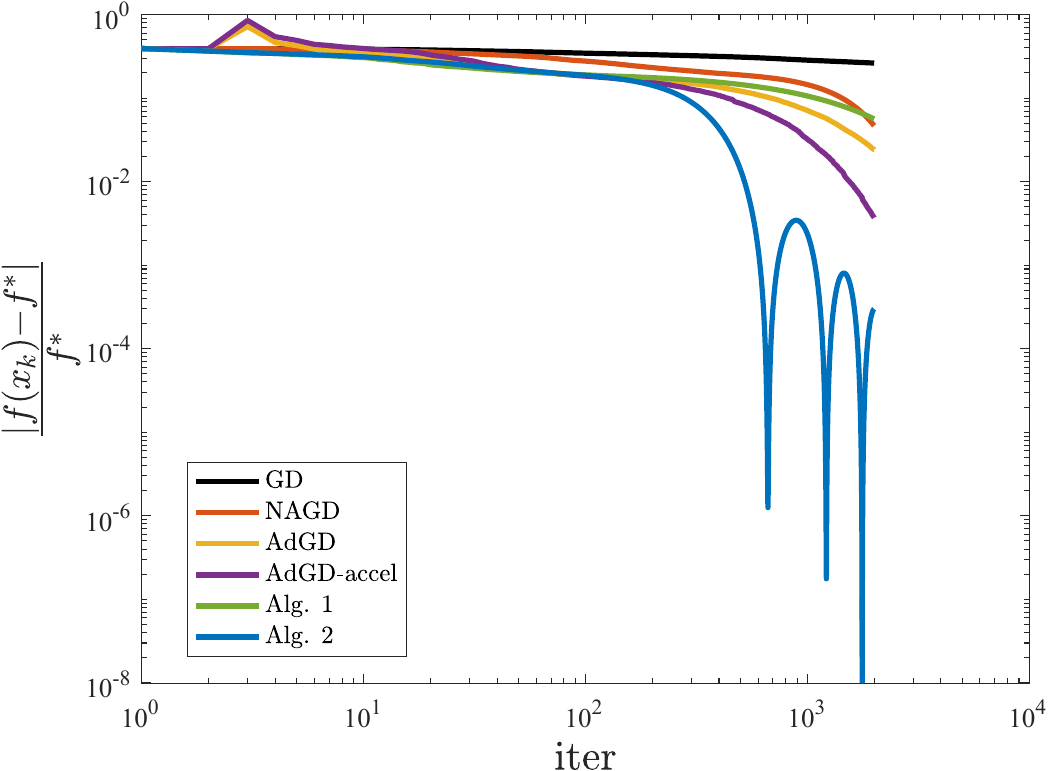}\\
    \subfloat[Logistic regression~\eqref{log-reg}]{\label{fig:img4}\includegraphics[scale=0.312]{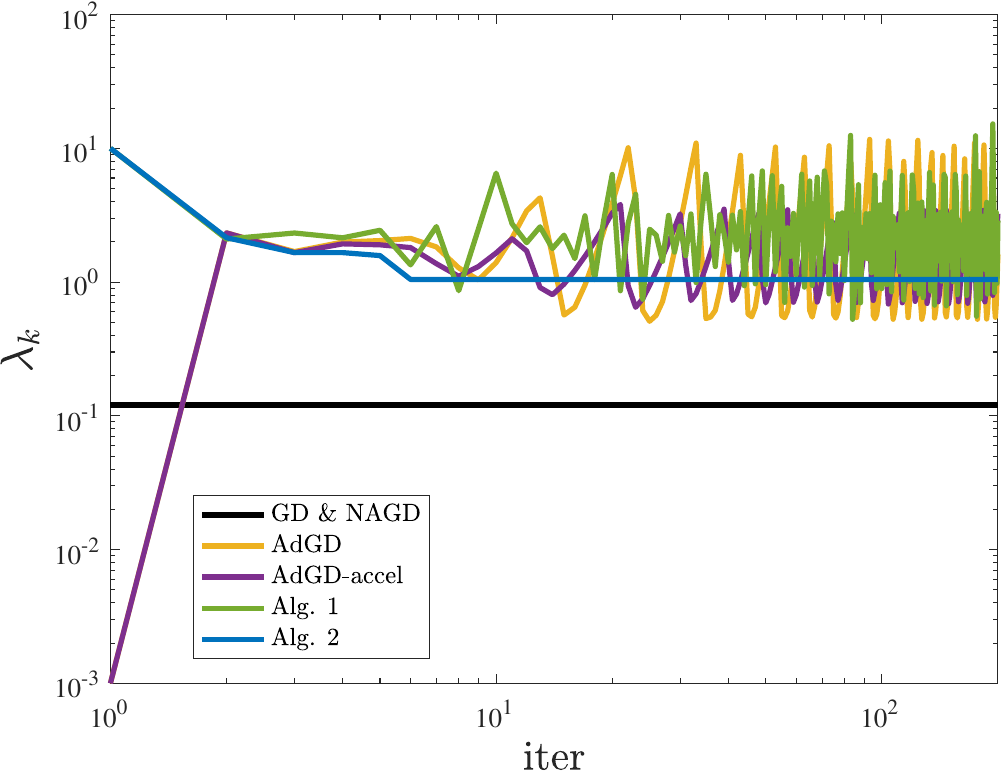}}
    \subfloat[Quadratic programming~\eqref{quad}]{\label{fig:img5}\includegraphics[scale=0.312]{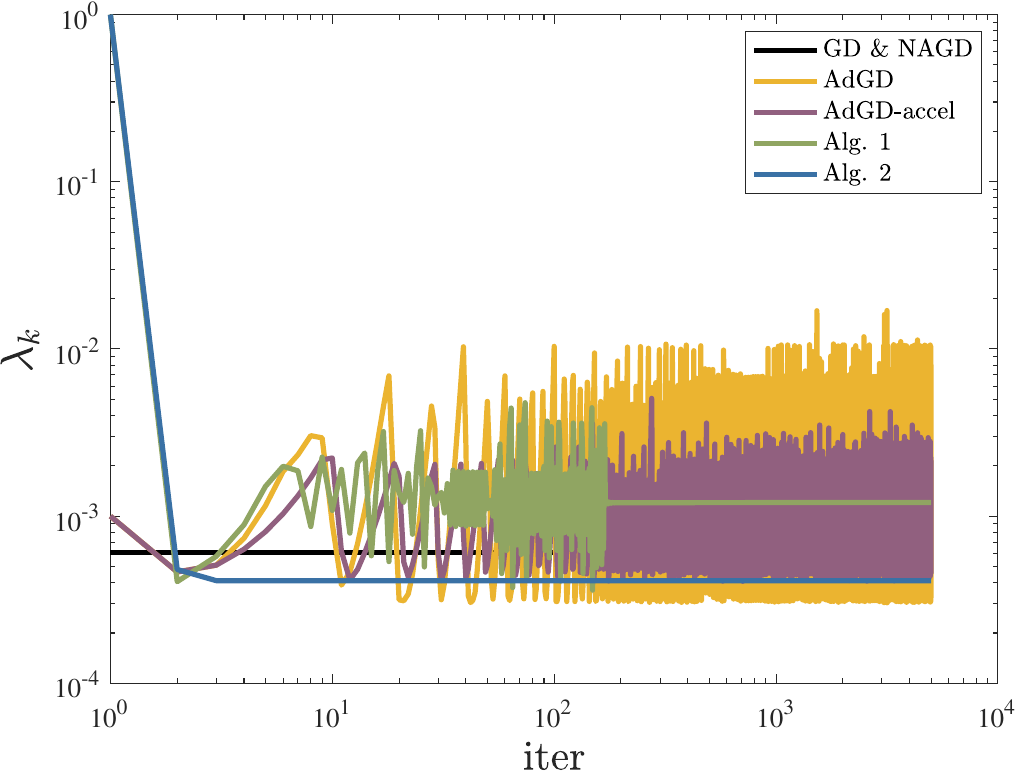}}
    \subfloat[Log-Sum-Exp~\eqref{log-sum-exp}]{\label{fig:img6}\includegraphics[scale=0.312]{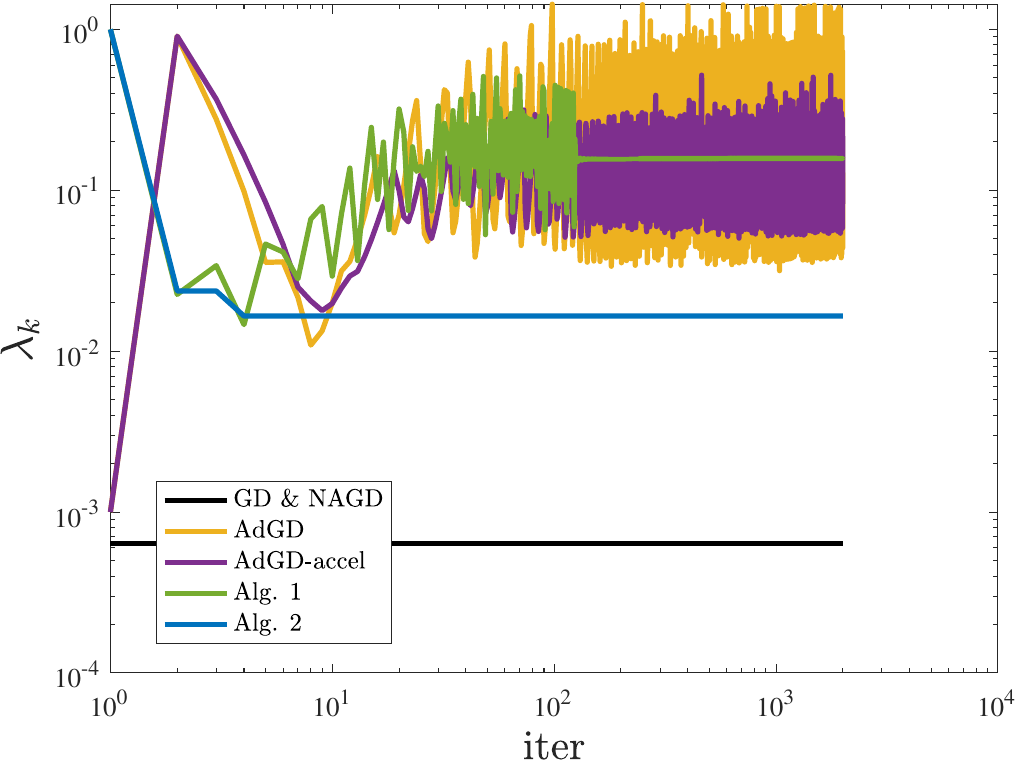}}
    \caption{The results for the class (1)~smooth minimization. The first row shows the optimality gap, and the second row shows the stepsize behavior.}
    \label{fig:imgsmooth}
\end{figure}
%%%%%%%%%%%%%%%%%%%%%%%%%%%%%%%%%%%%%%%
%\item \textbf{\textit{Max-Cut} 

\paragraph{\bf (vi) Approximate semidefinite programming:} Semidefinite programs are ubiquitous in combinatorial optimization and control~\cite{boyd1997semidefinite, El-Ghaoui, han2025low}, graph theory~\cite{tang2024solving}, network and communication~\cite{biswas2006semidefinite}. Most of the problems in this class essentially deal with minimizing the maximum eigenvalue of a matrix, which is a nonsmooth function of its variable. A popular example falling into this category is the \textit{max-cut} problem, whose $\varepsilon$-approximation with the regularizer~$\mathcal{R}(x)$ reads as 
\begin{equation}\label{regularized maxcut dual}
\min\limits_{y \in \mathbb{R}^n}  f_{\varepsilon}(C+\text{diag}(y)) - \langle \mathbf{1}, y \rangle + \eta \mathcal{R}(x), \quad 
f_{\varepsilon}(X) = \varepsilon \log \left(\sum_{i = 1}^n \exp(\lambda_i(X)/\varepsilon)\right),
\end{equation}
where $C$ is a constant matrix, $\eta$ is a regularization coefficient, and $f_{\varepsilon}(X)$ is the smooth convex approximation of $\lambda_{\textbf{max}}(X)$. As shown in \cite{nesterov2007smoothing}, the gradient of $f_{\varepsilon}(X)$ can be computed as 
\begin{equation*}
    \nabla f_{\varepsilon}(X) =  \dfrac{\sum_{i = 1}^n \exp(\lambda_i(X)/\varepsilon) q_i q_i ^\top}{\sum_{i = 1}^n \exp(\lambda_i(X)/\varepsilon)},
\end{equation*}
where $q_i$ is the $i^{\text{th}}$ column of the unitary matrix $Q$ in the eigen-decomposition $Q \Sigma Q^\top$ of $X$ with eigenvalues $\lambda_1\geq \cdots \geq \lambda_n$. In addition, $f_{\varepsilon}(X)$ is smooth with the smoothness parameter~$ 1/\varepsilon$. Figure~\ref{fig:maxcut_sdp} shows the simulation results of solving~\eqref{regularized maxcut dual} for different regularization coefficient~$\eta$ where the approximation level is~$\varepsilon = 10^{-5}$. The matrix $C$ is also generated using the Wishart distribution with $C = {G^\top G}/{\|G\|_{2}^2}$, where $G$ is a standard Gaussian matrix~\cite{helmberg2000spectral}, $n = 100$, and $\mathcal{R}(y) = \|y\|^2$.

\begin{figure}
    \centering
    \captionsetup{justification=centering}
    \includegraphics[scale=0.22]{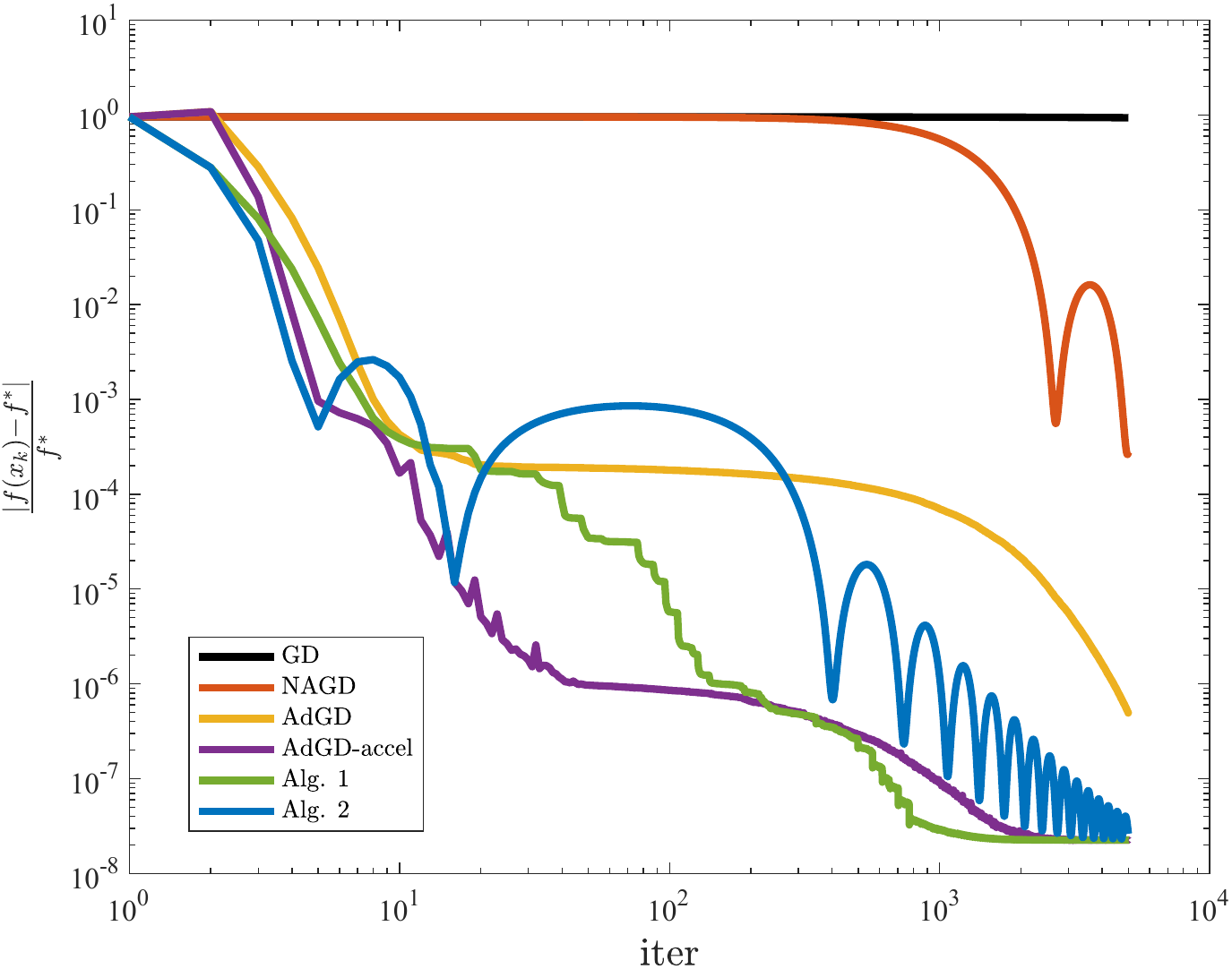}
    \includegraphics[scale=0.22]{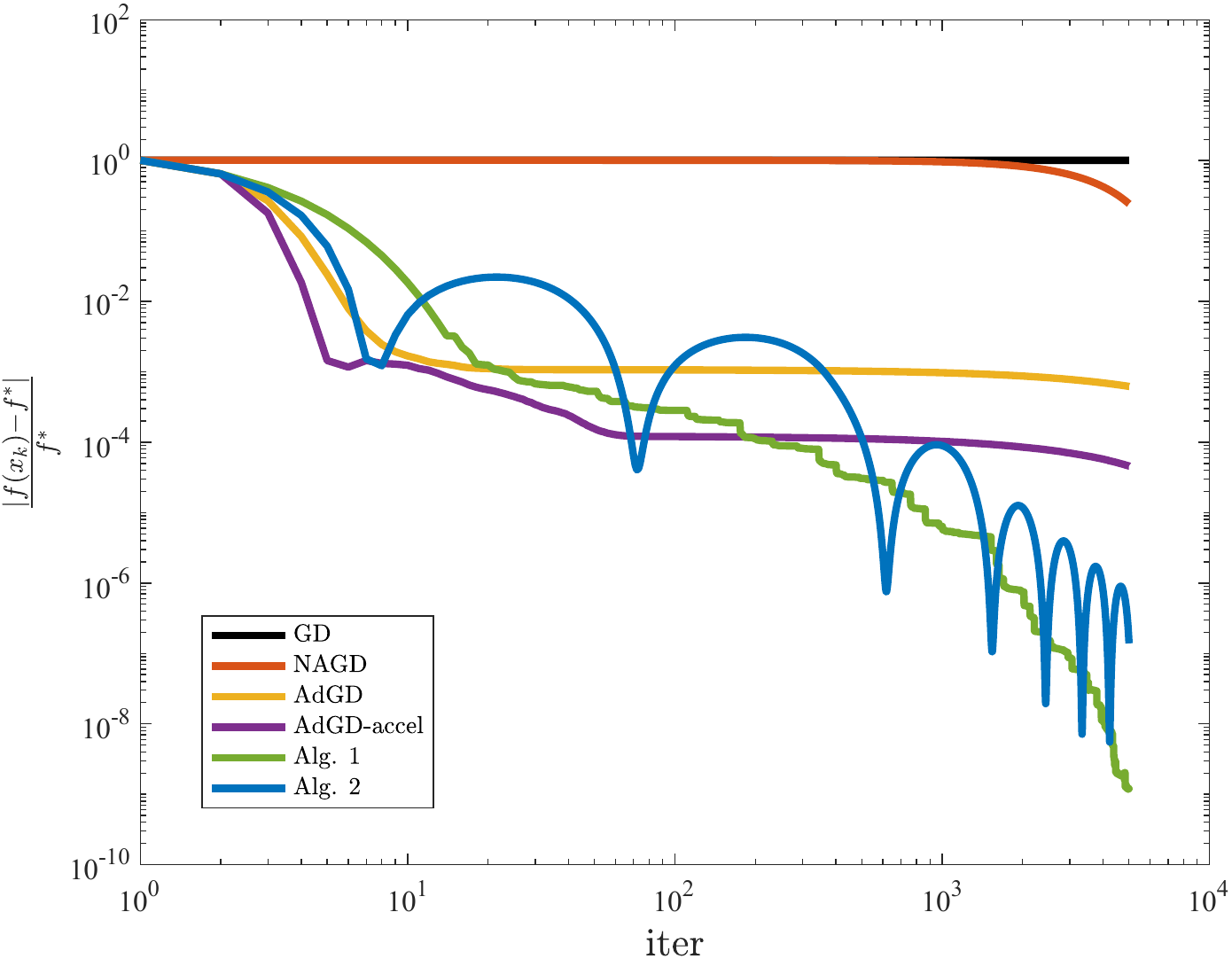}
    \includegraphics[scale=0.22]{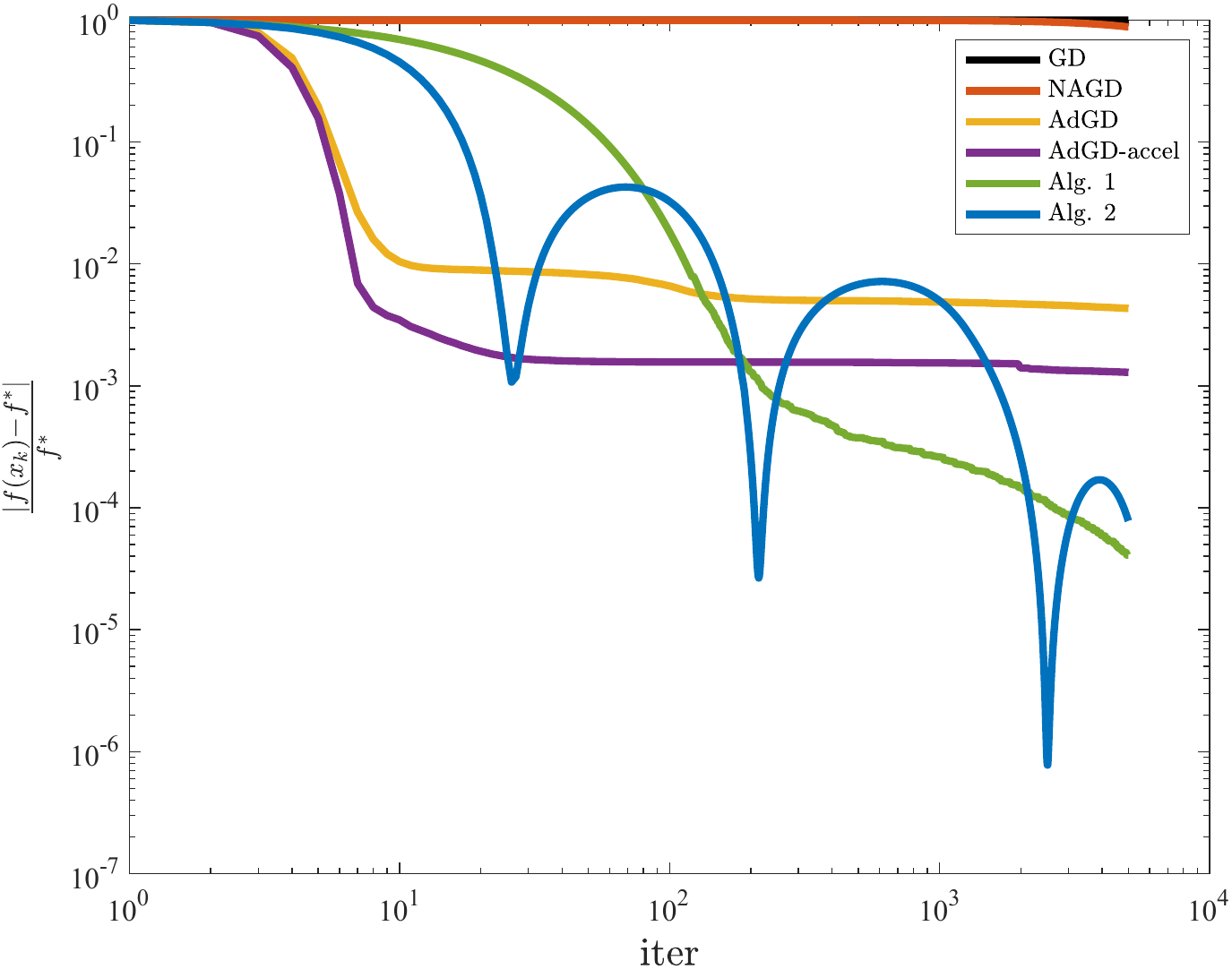}\\
    \subfloat[$\eta = 0.1$]{\label{fig:sdp1}\includegraphics[scale=0.22]{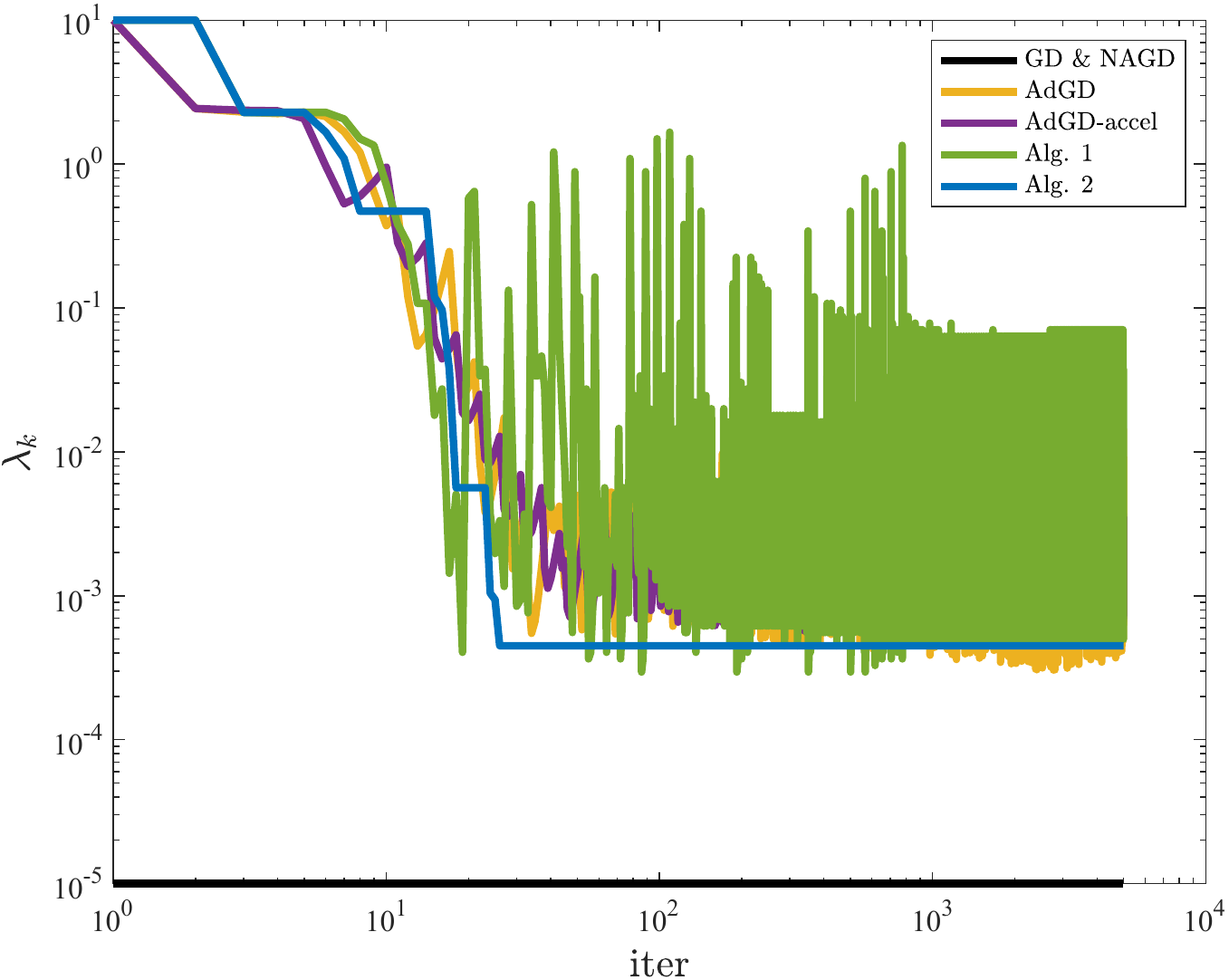}}
    \subfloat[$\eta = 0.01$]{\label{fig:sdp2}\includegraphics[scale=0.22]{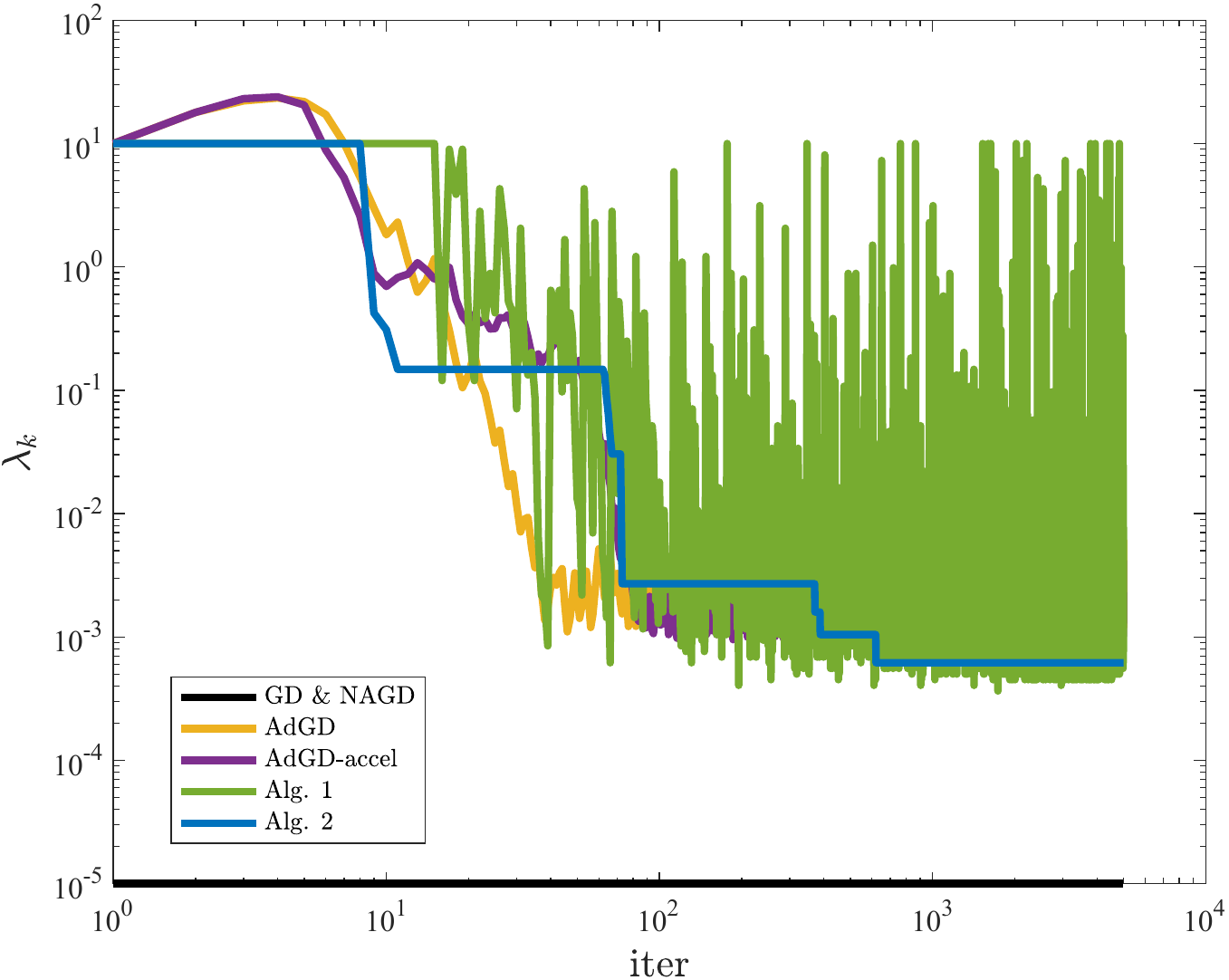}}
    \subfloat[$\eta = 0.001$]{\label{fig:sdp3}\includegraphics[scale=0.22]{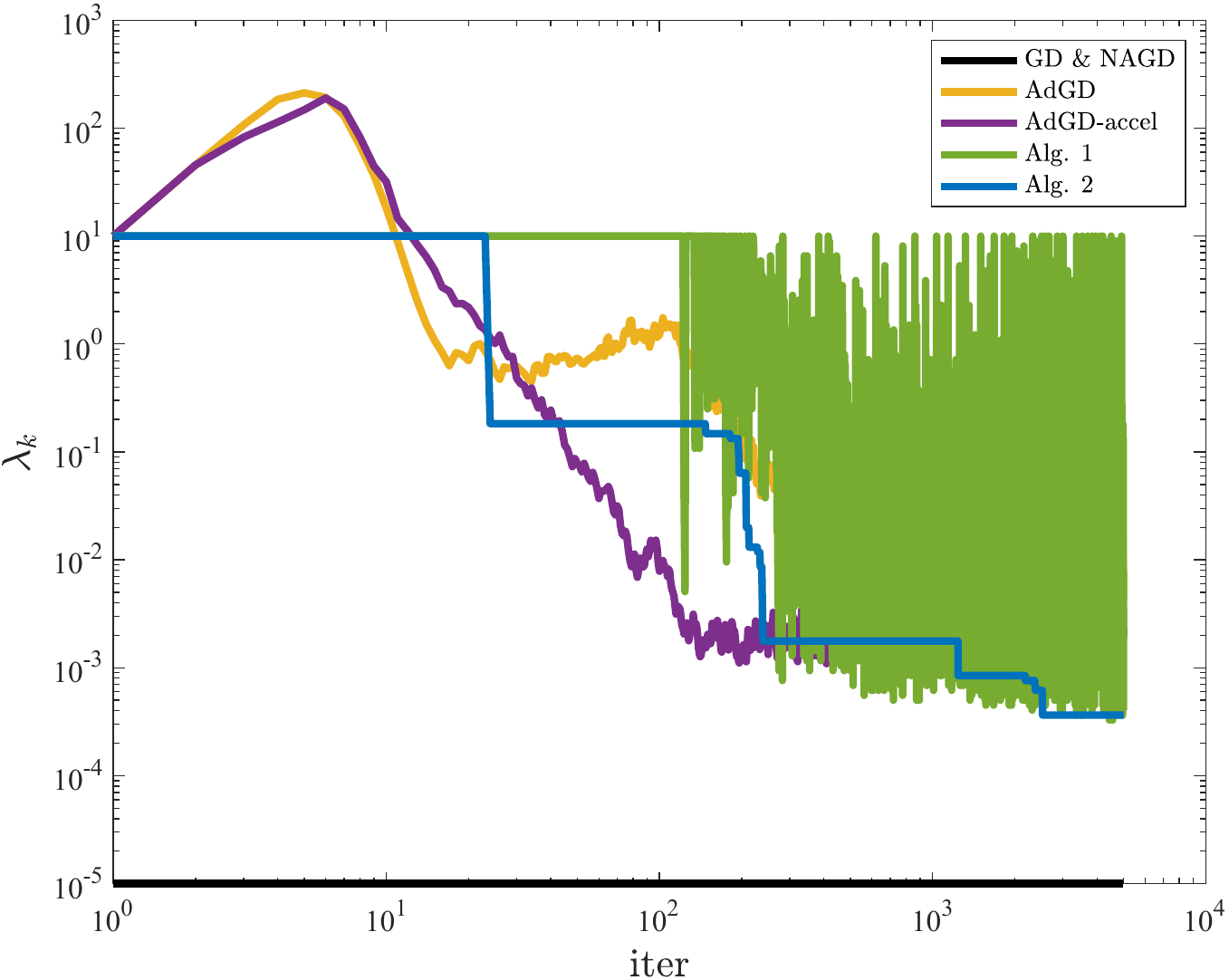}}
    \caption{Approximate maximum eigenvalue~\eqref{regularized maxcut dual}. The first row shows the optimality gap and the second row shows the stepsize behavior.}
    \label{fig:maxcut_sdp}
\end{figure}
%%%%%%%%%%%%%%%%%%%%%%%%%%%%%%%%%%%%%%%%%%%%%%%%%%%%%%%%%%%%%%%%%%%%%%%%%%%%%%%%%%%%%%%%%%%
\subsection{Composite minimization}
Motivated by sparse recovery and reconstruction problems \cite{neykov2016l1, selesnick2017sparse, gaines2018algorithms}, as well as sparse classification and feature selection \cite{bertsimas2021sparse}, we consider the following non-smooth composite minimization formulations:

\paragraph{\bf (i)~$\ell_1$-Regularized least~square~\cite{neykov2016l1, selesnick2017sparse}:}
\begin{align}
\label{L1-reg}
    \min\limits_{x \in \mathbb{R}^d} \underbrace{\|Ax-b\|_{2}^2}_{f(x)} + \underbrace{\gamma \|x\|_1}_{h(x)}.
\end{align}

\paragraph{\bf (ii) $\ell_1$-Constrained least~square~\cite{gaines2018algorithms}:} 
\begin{align}
\label{L1-const}
\min\limits_{x \in \mathbb{R}^d} \{\|Ax-b\|_{2}^2 \, : \|x\|_1 \leq 1\} = \min\limits_{x \in \mathbb{R}^d} \underbrace{\|Ax-b\|_{2}^2}_{f(x)} + \underbrace{\delta_{B_1[0, 1]}}_{h(x)}.
\end{align}

\paragraph{\bf (iii) $\ell_1$-Regularized logistic~regression~\cite{bertsimas2021sparse}:} 
\begin{align}
\label{L1-log-reg}
\min\limits_{x \in \mathbb{R}^d}  \underbrace{\dfrac{1}{N} \sum_{i=1}^{N} \log \left(1+\exp(-b_i a_{i}^\top x)\right)}_{f(x)} + \underbrace{\gamma \|x\|_1}_{h(x)}.
\end{align}

%\begin{enumerate}[label=(\roman*), itemsep = 0mm, topsep = 0mm, leftmargin = 7mm]
    %\item \textbf{$\ell_1$-Regularized least square:} $\min\limits_{x \in \mathbb{R}^d} \underbrace{\|Ax-b\|_{2}^2}_{f(x)} + \underbrace{\gamma \|x\|_1}_{h(x)}$.
    % \item \textbf{$\ell_1$-Constrained least square:} $\min\limits_{x \in \mathbb{R}^d} \{\|Ax-b\|_{2}^2 \, : \|x\|_1 \leq 1\} \equiv \min\limits_{x \in \mathbb{R}^d} \underbrace{\|Ax-b\|_{2}^2}_{f(x)} + \underbrace{\delta_{B_1[0, 1]}}_{h(x)}$.
    % \item \textbf{$\ell_1$-Regularized logistic regression:} $\min\limits_{x \in \mathbb{R}^d}  \underbrace{\dfrac{1}{N} \sum_{i=1}^{N} \log \left(1+\exp(-b_i a_{i}^\top x)\right)}_{f(x)} + \underbrace{\gamma \|x\|_1}_{h(x)}$.
%\end{enumerate}
The functions $f(x)$ and $h(x)$ represent the smooth and prox-friendly parts, respectively. In the previous subsection, we explain the smooth parts and their smooth constant. The prox-friendly term~$h(x)$ has a closed form solution provided in Table~\ref{table2} where $\delta$, $B_1 [0,1]$, $[\cdot]_+$, and $\odot$ are indicator function, unit ball with 1-norm, positive part selector, and elementwise product, respectively~\cite{Beck2017}. 
\begin{table}[H]
    \centering
    \caption{Closed-form prox-operator of~$h(x)$.}
    \begin{tabular}{ccc}
    \hline
         {$h(x)$} && {$\text{prox}_{h(x)}$}\\
         \hline
         $\gamma\|x\|_1$ && $\tau_{\gamma}(x) = \big[|x| - \gamma \textbf{e}\big]_{+} \odot \textbf{sign}(x)$ \vspace{1mm} \\ 
         $\delta_{B_1[0, 1]}$ && $P_{B_1[0, 1]}(x) = \min\limits_{y \in {B_1[0, 1]}} \|y-x\|$\\
         \hline
    \end{tabular}
    \label{table2}
\end{table}

In our simulations, we set $N = d = 200$, $\gamma$ proportional to ${1}/{N}$, $A = 5\cdot\texttt{rand(n,n)}$, and $b=\texttt{rand(n,1)}$, where \texttt{rand($\cdot $)} is uniformly distributed random numbers in the interval $(0,1)$. The results are provided in Figure~\ref{fig:img8}. We wish to note that for the objective functions~\eqref{L1-reg} and \eqref{L1-const}, the proposed accelerated stepsize~\ref{alg:Algorithm2} has a competitive result and similar behavior with the best performance FISTA~\cite{Beck2009} while in the case of \eqref{L1-log-reg} outperforms all the other algorithms notably. 

\begin{figure}[H]
    \centering
    \captionsetup{justification=centering}
    \includegraphics[scale=0.295]{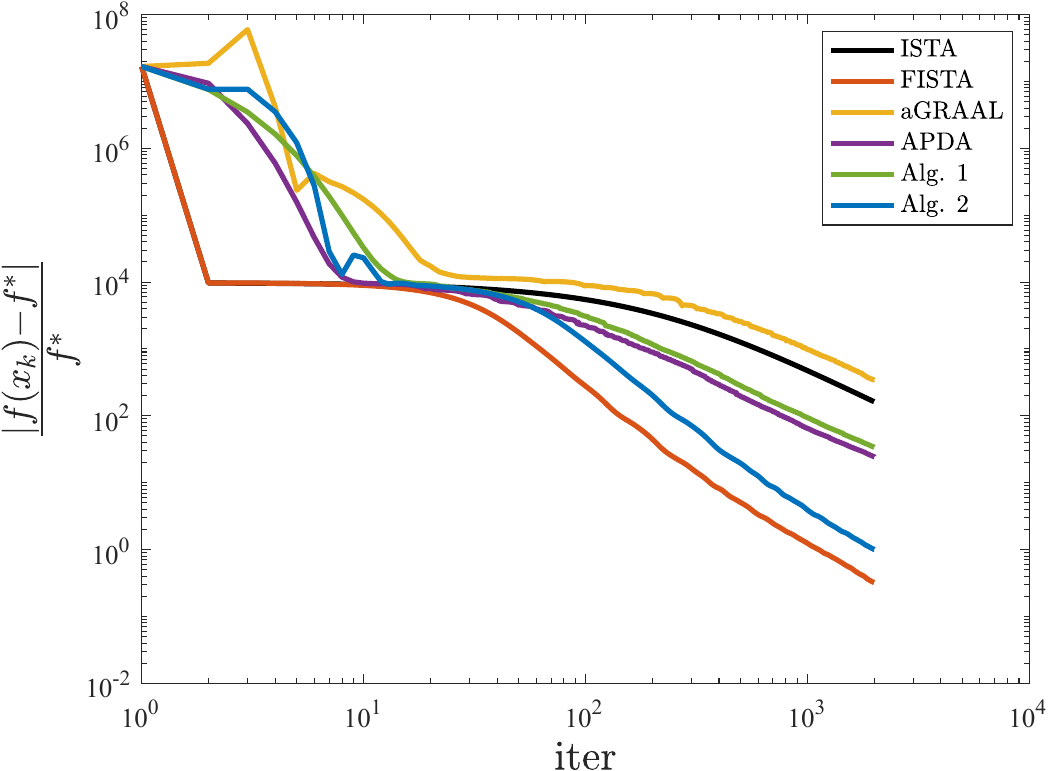}
    \includegraphics[scale=0.295]{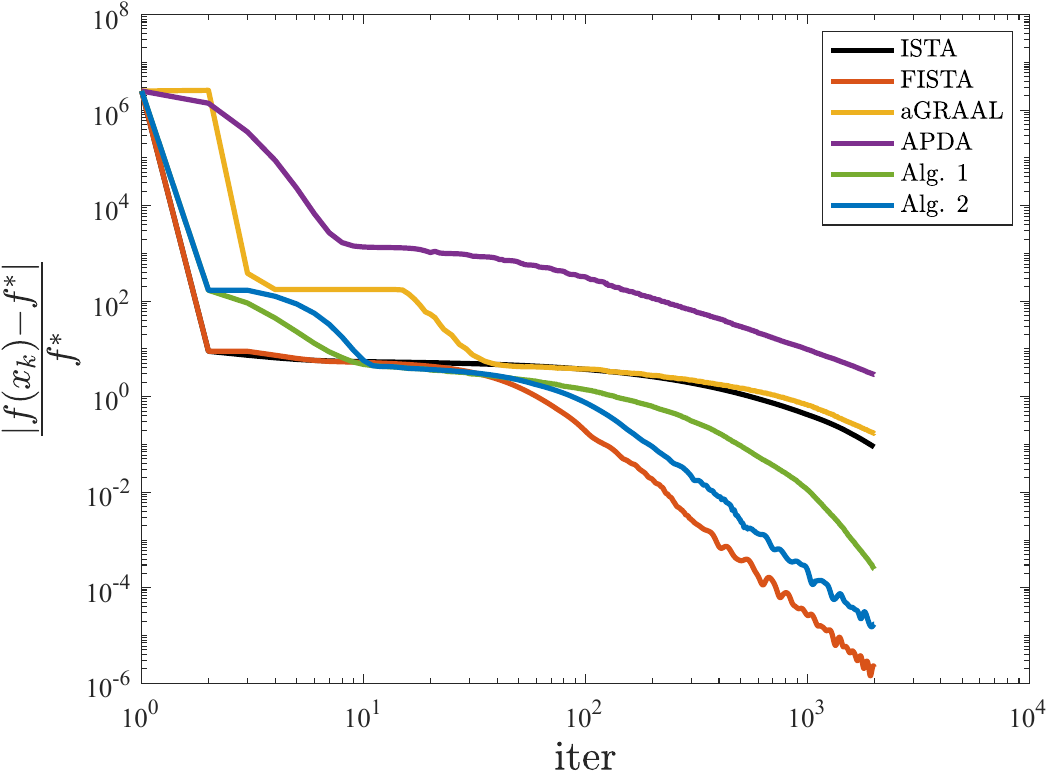}
    \includegraphics[scale=0.295]{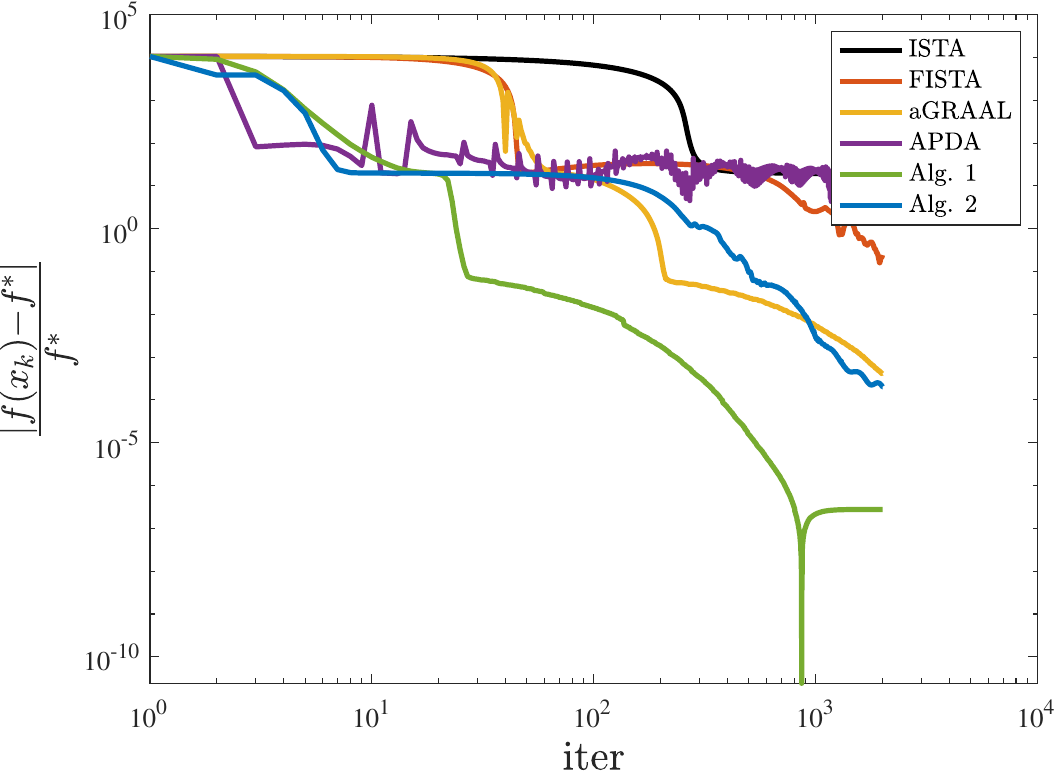}\\
    \subfloat[$\ell_1$-Regularized least square \eqref{L1-reg}]{\label{fig8:s2fig1}\includegraphics[scale=0.312]{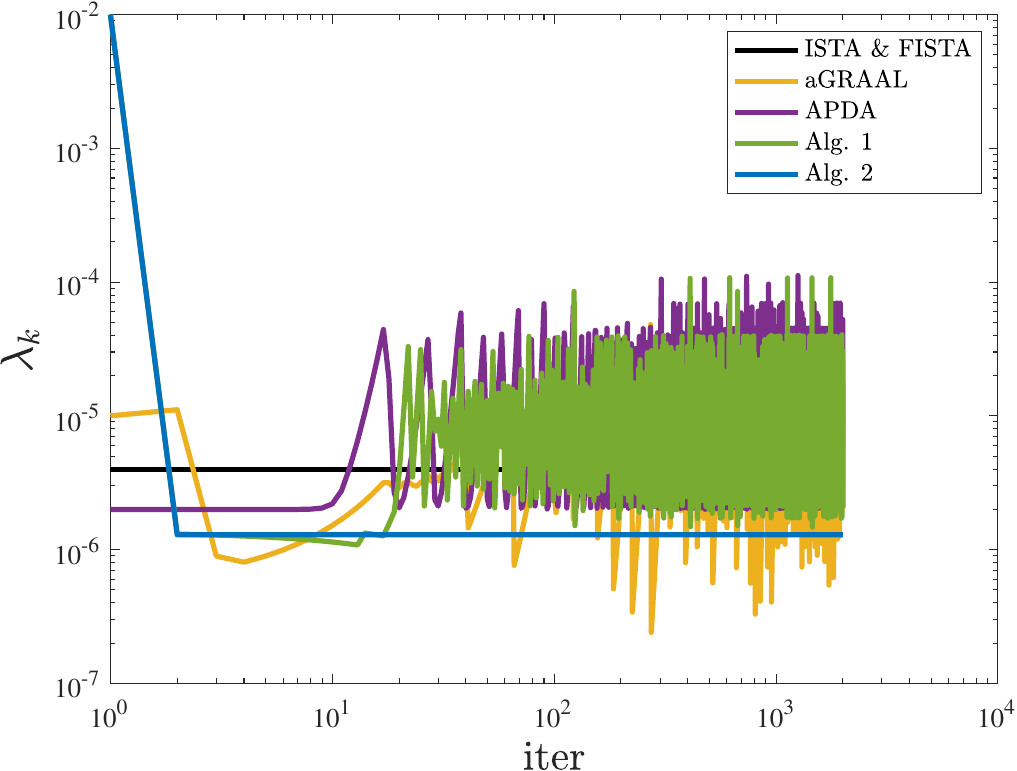}}
    \subfloat[$\ell_1$-Constrained least square \eqref{L1-const}]{\label{fig8:s2fig2}\includegraphics[scale=0.312]{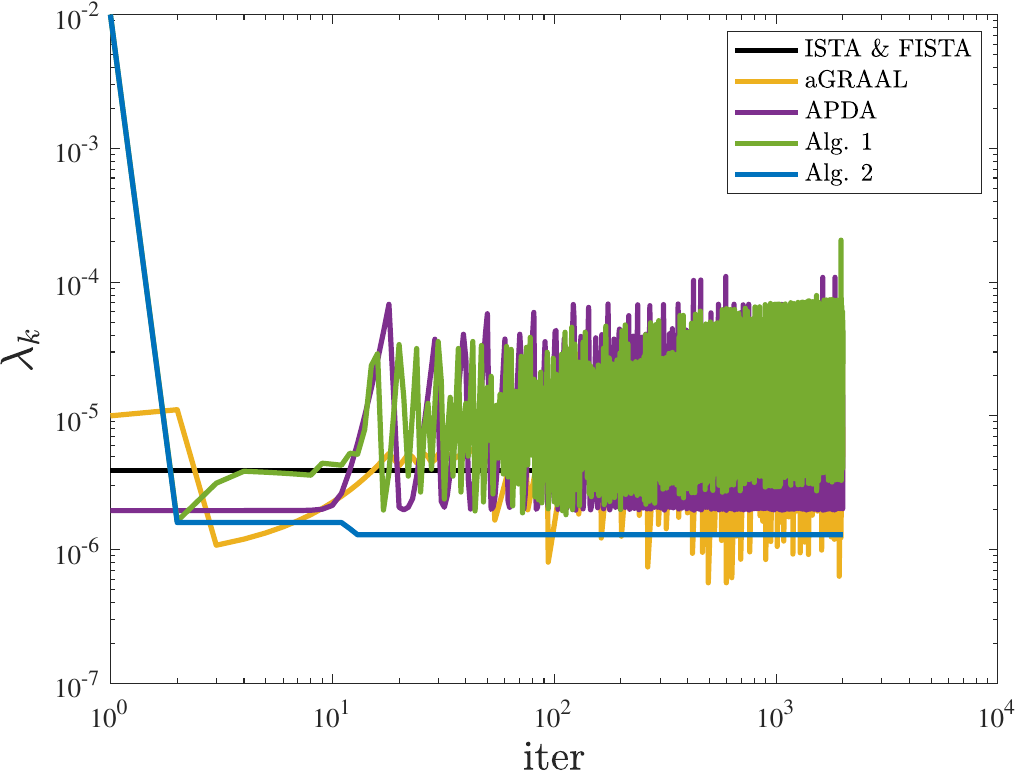}}
    \subfloat[$\ell_1$-Regularized logistic reg~\eqref{L1-log-reg}]{\label{fig8:s2fig3}\includegraphics[scale=0.312]{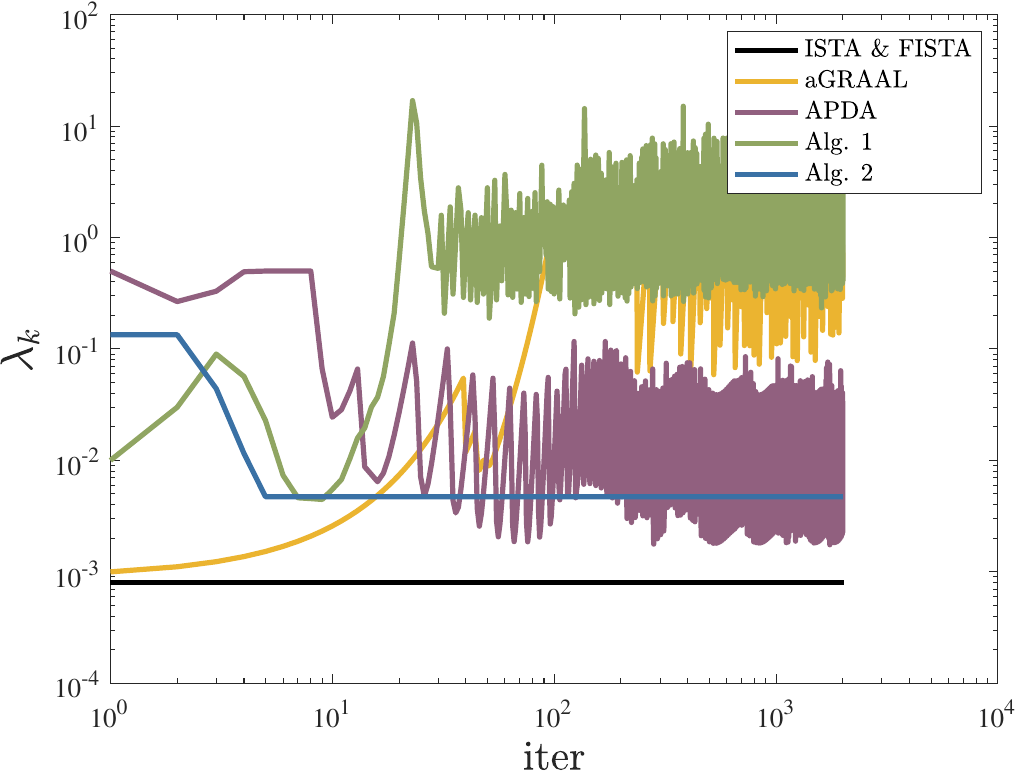}}
    \caption{The results for the class (2)~composite minimization. The first row shows the optimality gap, and the second row shows the stepsize behavior.}
    \label{fig:img8}
\end{figure}
%%%%%%%%%%%%%%%%%%%%%%%%%%%%%%%%%%%%%%%%%%%%%%%%%%%%%%%%%%%%%%%%%%%%%%%%%%%%%%%%%%%%%%%%%%%%%%%%%%%%%%%%%%%%%%%%%%%%%%%%%%%%%%%%%%%%%%%%%%%%%%%%%%%%%%%%%%%%%%%%%%%%%%%%%%%%%%
\subsection{Non-convex minimization:}
%\textbf{Cubic regularization.} Usually, in high-order methods, especially in cubic regularization of the Newton method, we need to minimize the following function at each iteration \cite{Nesterov2006}
We also test our algorithm in non-convex minimization. We consider the case of cubic optimization that is useful in high-order methods and trust-region subproblems \cite{Nesterov2006, chen2022accelerating} which is defined as 
\begin{equation}
\label{cubic}
    \min_{x\in\mathbb{R}^d} \dfrac{1}{2}x^\top H x + g^\top x + \dfrac{M}{6} \|x\|^3, 
\end{equation}
where the matrix~$H\in \mathbb{R}^{d\times d}$, vector $g\in \mathbb{R}^d$, and scalar $M > 0$ are the problem data. Note that in higher-order methods, the constant $M$ has an impact on the convergence rate~\cite{Nesterov2008}. Due to the cubic term, the function~\eqref{cubic} is neither convex nor smooth. The parameters~$g$ and $H$ are the gradient and the Hessian of the logistic loss~\eqref{log-reg}, respectively, and are computed using the same data as in Subsection~\ref{subsec: smooth-numerics} ((i)~logistic regression). We provide the simulations for different $M$ with $d = N = 200$, and the results are reported in Figure~\ref{fig:img7}.
\begin{figure}
    \centering
    \captionsetup{justification=centering}
    \includegraphics[scale=0.295]{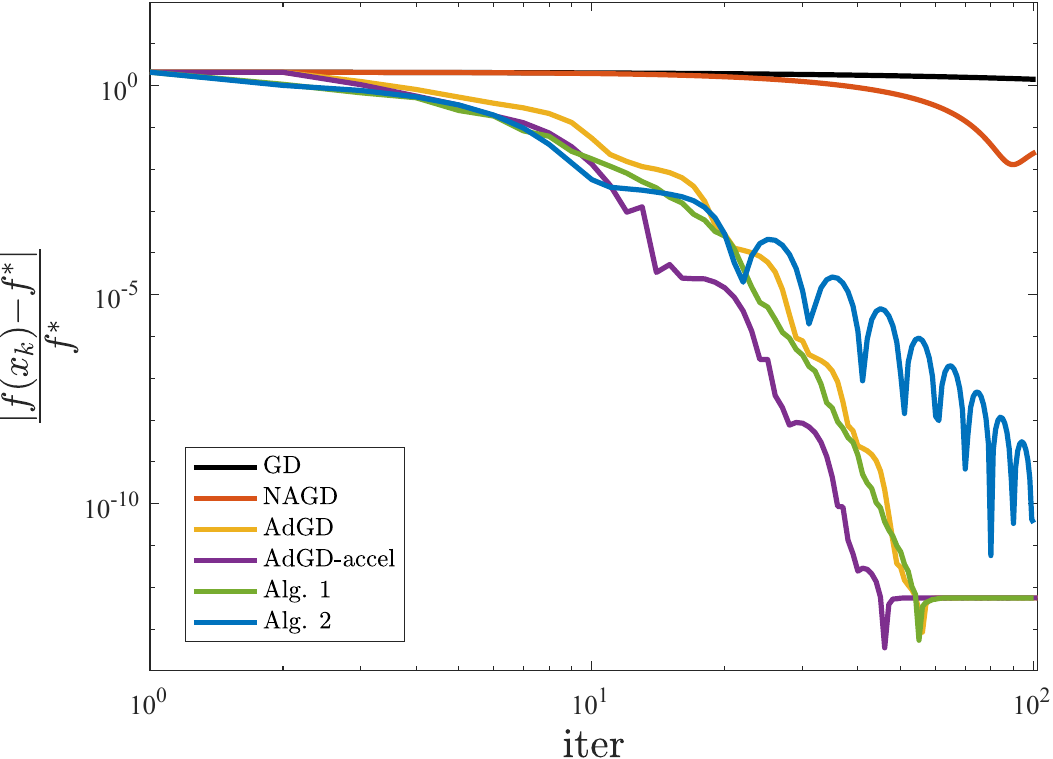}
    \includegraphics[scale=0.295]{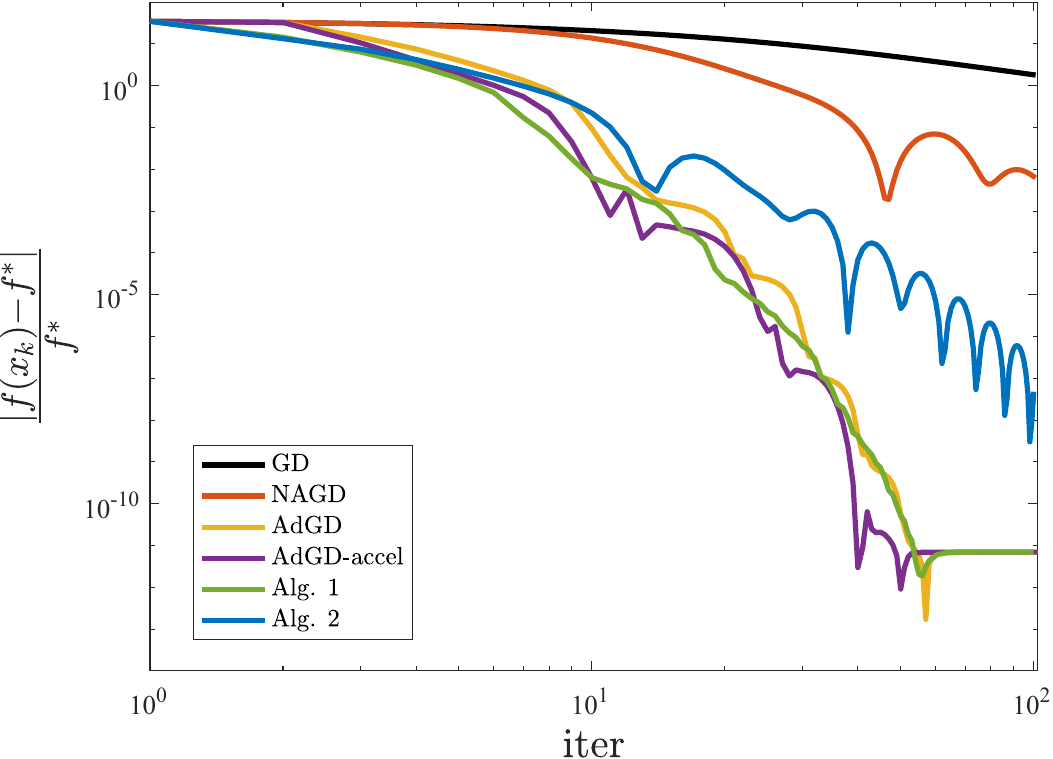}
    \includegraphics[scale=0.295]{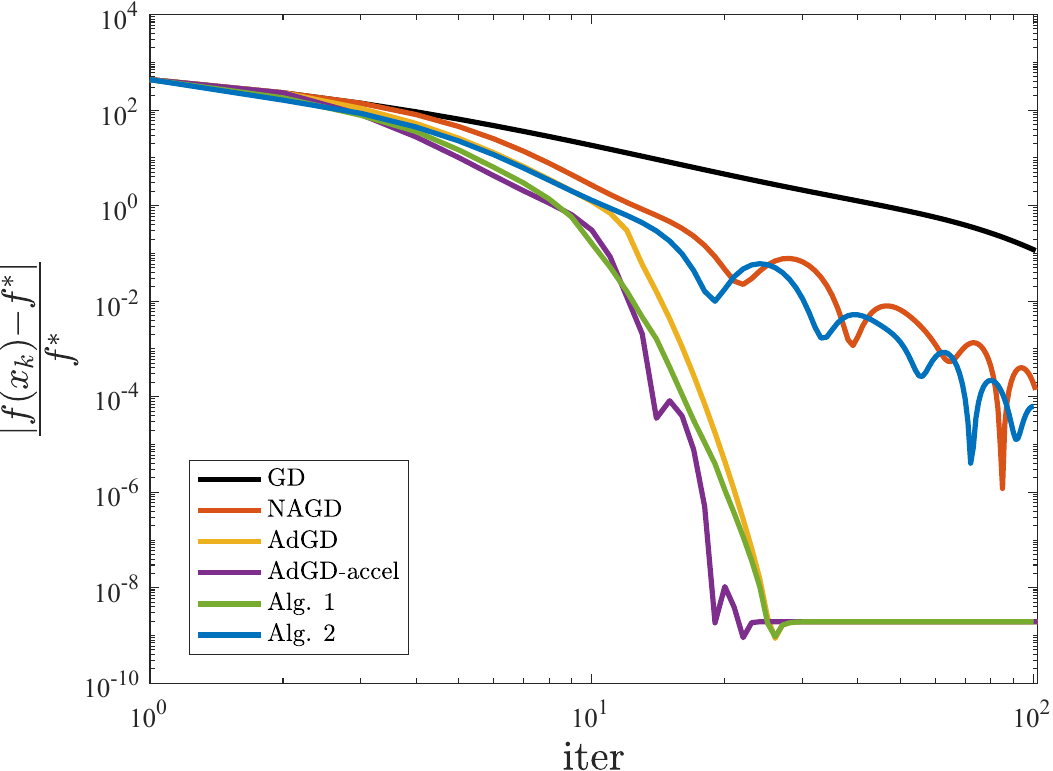}\\
    \subfloat[$M=0.5$]{\label{fig7:s2fig1}\includegraphics[scale=0.305]{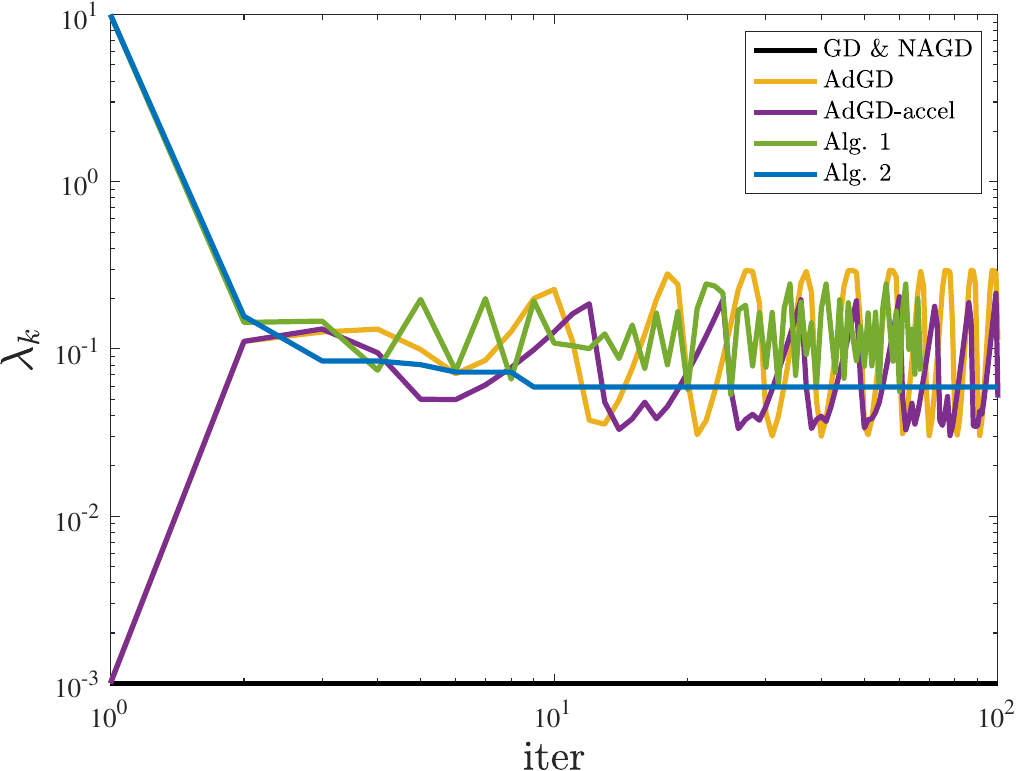}}
    \subfloat[$M=5$]{\label{fig7:s2fig2}\includegraphics[scale=0.305]{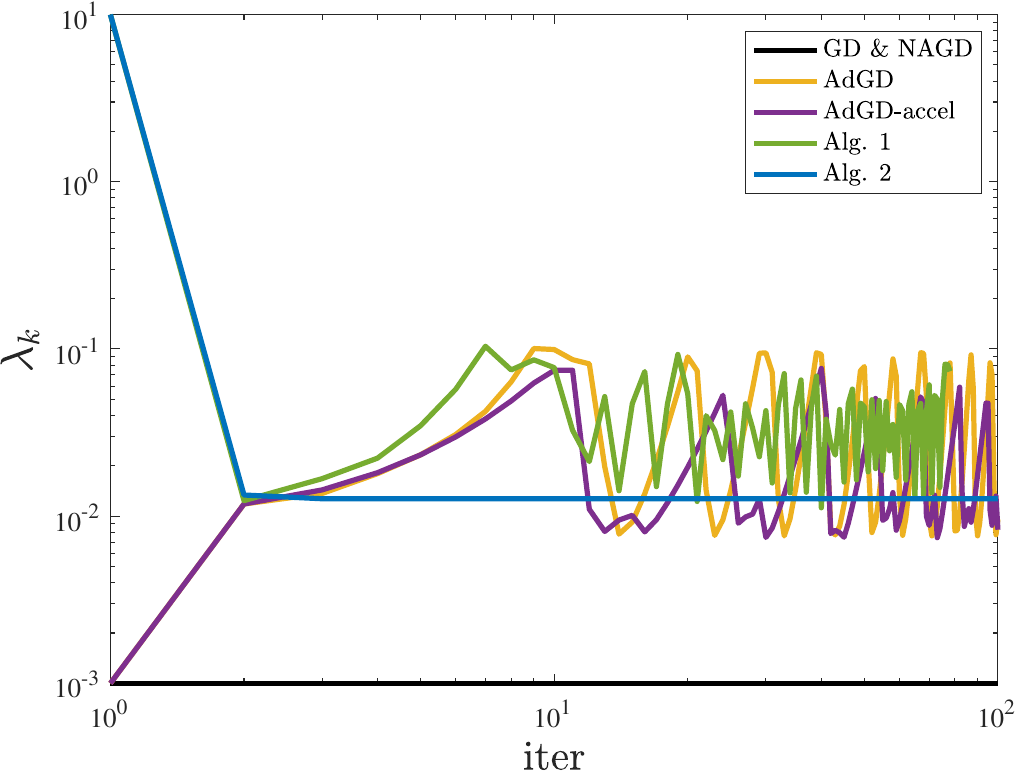}}
    \subfloat[$M=50$]{\label{fig7:s2fig3}\includegraphics[scale=0.305]{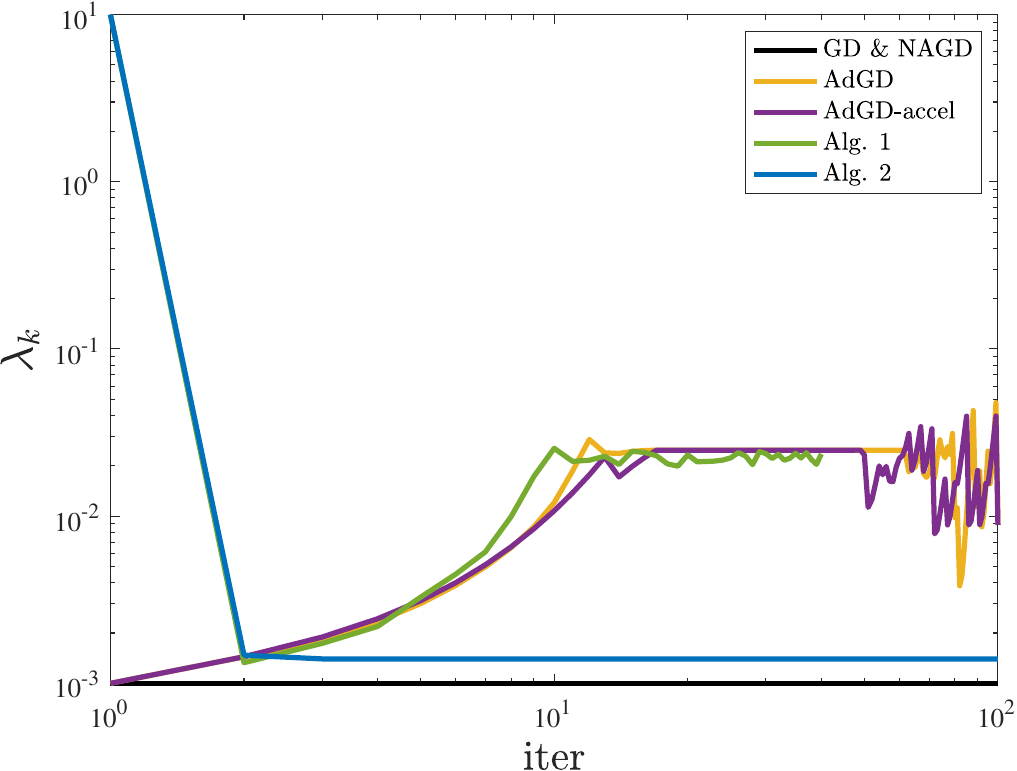}}
    \caption{Cubic regularization problem~\eqref{cubic}. The first row shows the optimality gap, and the second row shows the stepsize behavior for different values of~$M$.}
    \label{fig:img7}
\end{figure}
%%%%%%%%%%%%%%%%%%%%%%%%%%%%%%%%%%%%%%%%%%%%%%%%%%%%%%%%%%%%%%%%%%%%%%%%%%%%%%%%%%%%%%%%%%%%%%%%%%%%%%%%%%%%%%%%%%%%%%%%%%%%%%%%%%%%%%%%%%%%%%%%%%%%%%%%%%%%%%%%%%%%%%%%%%%%%%
\section{Conclusion and Future Directions} \label{conclussion}
This research proposes a new stepsize rule for non-accelerated and accelerated first-order methods for convex composite optimization. The proposed approach provably achieves the convergence optimal rate of $\mathcal{O}(k^{-2})$ for accelerated methods. Compared to other stepsize rules, our proposed scheme is implicit but only requires zeroth-order information to compute a suitable stepsize. Possible extension to this work could be the $min\,max$ problem \cite{mokhtari2020convergence,yoon2021accelerated, boct2022accelerated}, stochastic gradient descent optimization \cite{lacoste2012simpler}, variational inequalities \cite{nemirovski2004prox,juditsky2011solving, malitsky2020forward}, and non-convex optimization \cite{jin2018accelerated, vladarean2021first, alacaoglu2021convergence}. To mitigate the fact that the stepsize in \ref{alg:Algorithm2} is non-increasing, one could re-initialize it finitely many times after a predefined number of iterations. This technique is similar to the restarted accelerated gradient method \cite{Emmanuel2015,Kim2018, yang2018rsg} and it is left as future work.

The authors of a recent work \cite{liang2017activity} demonstrate that iterations generated by Forward–Backward splitting methods, which include several variants (e.g., ISTA, FISTA), lie on active manifolds within a finite number of iterations and enter a regime of local linear convergence. However, their analysis are based on the standard assumption that the stepsize belongs to $[0, 2/L]$. Departing from this convention is one of the future areas of interest. The authors in \cite{liang2017activity} also establish and explain why FISTA locally oscillates and can be slower than ISTA under the same standard assumption $\lambda \in [0,2/L]$. We observe similar behavior in our simulation results (Figures \ref{fig:img4}, \ref{fig8:s2fig2}, \ref{fig:img7}). Analyzing this behavior for $\lambda > 2/L$, which can also occur in our proposed stepsize rules, could be another future direction.
%Another future work direction is to extend the proposed linesearch method into a continuous time version of classic gradient descent and an accelerated version of that by having the Lyapunov function \cite{muehlebach2019dynamical, berthier2021continuized}. Indeed, one advantage of the proposed method is providing a Lyapunov energy \eqref{eq15} that we can use to analyze the convergence in various types of gradient descent methods.
\bibliographystyle{plain}
\bibliography{references.bib}
\end{document}